\pdfoutput=1
\documentclass[DIV=14]{scrartcl}
\usepackage{amsmath,amsthm,amssymb}
\usepackage{adjustbox}
\usepackage{breqn,bm,ifpdf,authblk,relsize,extpfeil}
\usepackage{mathalfa}
\usepackage{eucal}
\usepackage[abbrev,nobysame,lite]{amsrefs}
\usepackage[dvipsnames]{xcolor}
\usepackage[T1]{fontenc}
\usepackage{lmodern}
\usepackage[inline,shortlabels]{enumitem}
\usepackage{microtype}
\usepackage{hyperref}

\allowdisplaybreaks[1]

\makeatletter
\DeclareOldFontCommand{\rm}{\normalfont\rmfamily}{\mathrm}
\DeclareOldFontCommand{\sf}{\normalfont\sffamily}{\mathsf}
\DeclareOldFontCommand{\bf}{\normalfont\bfseries}{\mathbf}
\DeclareOldFontCommand{\it}{\normalfont\itshape}{\mathit}

\DeclareFontFamily{U}{BOONDOX-calo}{\skewchar\font=45 }
\DeclareFontShape{U}{BOONDOX-calo}{m}{n}{
  <-> s*[1.05] BOONDOX-r-calo}{}
\DeclareFontShape{U}{BOONDOX-calo}{b}{n}{
  <-> s*[1.05] BOONDOX-b-calo}{}
\DeclareMathAlphabet{\mathcalboondox}{U}{BOONDOX-calo}{m}{n}
\SetMathAlphabet{\mathcalboondox}{bold}{U}{BOONDOX-calo}{b}{n}
\DeclareMathAlphabet{\mathbcalboondox}{U}{BOONDOX-calo}{b}{n}


\newtheorem{thm}{Theorem}[section]
\newtheorem{lemma}[thm]{Lemma}
\newtheorem{prop}[thm]{Proposition}
\newtheorem{cor}[thm]{Corollary}

\newtheorem{thmabc}{Theorem}
\newtheorem{propabc}[thmabc]{Proposition}

\theoremstyle{definition}

\newtheorem{rem}[thm]{Remark}
\newtheorem{question}[thm]{Question}


\numberwithin{equation}{section}
\allowdisplaybreaks[1]

\renewcommand{\le}{\leqslant}
\renewcommand{\ge}{\geqslant}

\def\emph{}
\DeclareTextFontCommand{\bfemph}{\bfseries}
\DeclareTextFontCommand{\itemph}{\itshape}
\def\emph{\bfemph}

\newcommand{\q}{\ensuremath{\mathcalboondox{q}}}


\newextarrow{\xbigtoto}{{20}{20}{20}{20}}
{\bigRelbar\bigRelbar{\bigtwoarrowsleft\rightarrow\rightarrow}}

\makeatletter
\def\blankfootnote{\xdef\@thefnmark{}\@footnotetext}

\newcommand*{\textlabel}[2]{%
  \edef\@currentlabel{#1}
  \phantomsection
  #1\label{#2}
}
\makeatother

\newcommand{\incl}{\hookrightarrow}
\newcommand{\xto}{\xrightarrow}

\newcommand{\cI}{\ensuremath{\boldsymbol{\mathcal{I}}}}
\newcommand{\cQ}{\ensuremath{\boldsymbol{\mathcal{Q}}}}
\newcommand{\cQx}{\ensuremath{\boldsymbol{\mathcal{Q}^{\mathsmaller{\mathsmaller{*}}}}}}
\newcommand{\cQo}{\ensuremath{\boldsymbol{\mathcal{Q}^{\mathsmaller{\mathsmaller{\flat}}}}}}

\newcommand{\cZ}{\ensuremath{\boldsymbol{\mathcal{Z}}}}
\newcommand{\cZx}{\ensuremath{\boldsymbol{\mathcal{Z}^{\mathsmaller{\mathsmaller{*}}}}}}
\newcommand{\cZo}{\ensuremath{\boldsymbol{\mathcal{Z}^{\mathsmaller{\mathsmaller{\flat}}}}}}

\newcommand{\fA}{\ensuremath{\mathfrak{A}}}
\newcommand{\fAx}{\ensuremath{\mathfrak{A}^{\mathsmaller{\mathsmaller{*}}}}}
\newcommand{\fAo}{\ensuremath{\mathfrak{A}^{\mathsmaller{\mathsmaller{\flat}}}}}

\newcommand{\fX}{\ensuremath{\mathfrak{X}}}
\newcommand{\fXx}{\ensuremath{\mathfrak{X}^{\mathsmaller{\mathsmaller{*}}}}}

\DeclareMathOperator{\ask}{ask}

\DeclareMathOperator{\ad}{ad}
\DeclareMathOperator{\Hom}{Hom}
\DeclareMathOperator{\End}{End}

\DeclareRobustCommand{\myhash}{\adjustbox{valign=B,totalheight=.57\baselineskip}{\#}}%

\newcommand{\opair}{\ensuremath{\!\diamond}}

\DeclareMathOperator{\cnt}{\myhash}
\DeclareMathOperator{\cntsymb}{\underline{\myhash}}

\newcommand{\Primes}{\ensuremath{\mathcal{P}}}
\newcommand{\PrimePowers}{\ensuremath{\mathcal{D}}}

\DeclareMathOperator{\concnt}{k}
\DeclareMathOperator{\asksymb}{\underline{\mathsf{ask}}}
\DeclareMathOperator{\consymb}{\underline{\mathsf{k}}}
\DeclareMathOperator{\orbsymb}{\underline{\mathsf{\myhash}}}

\newcommand{\imm}{\ensuremath{\rightarrowtail}}

\DeclareMathOperator{\Spec}{Spec}
\DeclareMathOperator{\diag}{diag}
\DeclareMathOperator{\GL}{GL}
\DeclareMathOperator{\Coker}{Coker}
\DeclareMathOperator{\Ker}{Ker}
\DeclareMathOperator{\Uni}{U}
\DeclareMathOperator{\Mat}{M}
\DeclareMathOperator{\rank}{rk}

\newcommand{\card}[1]{\lvert#1\rvert}
\DeclarePairedDelimiter{\abs}{\lvert}{\rvert}

\newcommand{\llb}{\ensuremath{[\![ }}
\newcommand{\rrb}{\ensuremath{]\!] }}

\newcommand{\divides}[2]{\ensuremath{ {#1} \mid {#2} }}

\renewcommand{\AA}{\mathbf{A}}
\newcommand{\QQ}{\mathbf{Q}}
\newcommand{\FF}{\mathbf{F}}
\newcommand{\ZZ}{\mathbf{Z}}

\newcommand{\CC}{\mathbf{C}}
\newcommand{\fg}{\ensuremath{\mathfrak g}}
\newcommand{\fn}{\ensuremath{\mathfrak n}}
 \newcommand{\sH}{\mathsf{H}}


\ifpdf
  \pdfcompresslevel=9
  \hypersetup{pdftitle={On the enumeration of orbits of unipotent groups over finite fields},
    pdfauthor={Tobias Rossmann}
  }
\fi

\overfullrule=8pt

\title{On the enumeration of orbits of unipotent groups over finite fields}
\subtitle{\it ``Where the wild things are''}

\author{Tobias Rossmann}

\date{}

\pagestyle{headings}

\begin{document}
\thispagestyle{empty}

\maketitle

\thispagestyle{empty}
\vspace*{-2em}
\begin{abstract}
  \small
  We show that the enumeration of linear orbits and conjugacy classes
  of $\ZZ$-defined unipotent groups over finite fields is ``wild'' in the following
  sense:
  given an arbitrary scheme $Y$ of finite type over $\ZZ$ and integer $n\ge 1$,
  the numbers $\cnt\! Y(\FF_q) \bmod q^n$ can be expressed, uniformly in~$q$, in
  terms of the numbers of linear orbits (or numbers of conjugacy classes) of
  finitely many $\ZZ$-defined unipotent groups over $\FF_q$ and finitely many
  Laurent polynomials in $q$.
\end{abstract}

\blankfootnote{\noindent{\itshape 2020 Mathematics Subject Classification.}
  20D15, 
  20E45, 
  05A15, 
  11G25  

  \noindent {\itshape Keywords.}
  Conjugacy classes, linear orbits, unipotent groups, average sizes of kernels
}

\section{Introduction}
\label{s:intro}

\paragraph{Linear orbits and conjugacy classes.}
This article is devoted to what might be called the ``symbolic enumeration''
of linear orbits and conjugacy classes of families of matrix groups.
Let $\concnt(G)$ denote the number of conjugacy classes (or \emph{class number})
of a group $G$.
For a (commutative, unital) ring~$R$, let $\GL_n(R)$ denote the group of
invertible $n\times n$ matrices over $R$, and let $\Uni_n(R) \le \GL_n(R)$ be
the subgroup of upper unitriangular matrices.

It is well-known that $\concnt(\GL_n(\FF_q))$ is given by a polynomial in $q$;
see \cite[Exercise~1.190]{Sta12}.
Enumerating linear orbits (i.e.\ orbits of linear groups on their natural
modules) often turns out to be an easier task than
enumerating conjugacy classes.
For example, while proving that $\concnt(\GL_n(\FF_q))$ depends polynomially
on $q$ is elementary, it is a complete triviality that
$\cnt(\FF_q^n/\GL_n(\FF_q)) = 2$ for $n\ge 1$.
Whether $\concnt(\Uni_n(\FF_q))$ depends polynomially on $q$ or not is
precisely the subject of ``Higman's conjecture'';
see e.g.\ \cite{Hig60a,VLA03,PS15}.
Again, linear orbits are much more easily enumerated: it is an easy exercise
to show that $\cnt(\FF_q^n/\Uni_n(\FF_q)) = nq-n+1$.

\paragraph{Linear algebraic groups.}
The functors $\GL_n$ and $\Uni_n$ are (represented by) group
schemes over $\ZZ$.
In this article, by a \emph{linear algebraic group} $G$ over $\QQ$, we mean a
closed subgroup scheme of $\GL_n\otimes \QQ$ for some $n\ge 1$.
Equivalently, we may think of such a $G$ as being encoded by a
Zariski-closed subgroup of $\GL_n(\CC)$ whose vanishing ideal
is generated by polynomials with rational coefficients.
Given $G\le \GL_n\otimes \QQ$, by clearing denominators,
for each ring~$R$, we obtain a group $G(R) \le \GL_n(R)$.
(More formally, letting $\mathcal G$ denote the scheme-theoretic closure of
$G$ in $\GL_n$, a group scheme over $\ZZ$, we abuse notation and write $G(R)$
for $\mathcal G(R)$.)

It is well known that every unipotent algebraic group over $\QQ$ is isomorphic
to a closed subgroup scheme of $\Uni_n\otimes \QQ$ for some $n$;
see e.g.\ \cite[Ch.\ IV, \S 2, Prop. 2.5]{DG70}.
This article is a contribution to the following.

\begin{question}
  \label{qu:motivation}
  Given a unipotent algebraic group $G\le \Uni_n\otimes \QQ$ over $\QQ$, how
  do the numbers $\concnt(G(\FF_q))$ and $\cnt(\FF_q^n/G(\FF_q))$ vary with
  the prime power $q$?
\end{question}

Here, the underlying prime $p$ of $q = p^f$ is allowed to vary, as is the
exponent $f\ge 1$.
By the \emph{symbolic enumeration} of the orbits in 
Question~\ref{qu:motivation}, we mean the (arguably vaguely defined) task of
providing meaningful descriptions of the counting functions of $q$ given
there.
For example, the tamest case occurs when these counting functions are given by
polynomials in $q$, as they are e.g.\ for $G = \Uni_3\otimes \QQ$ (the
Heisenberg group).

\begin{rem}
  Assuming that we are willing to discard finitely many (potentially)
  exceptional characteristics in Question~\ref{qu:motivation}, it turns out that
  it suffices (in a precise sense) to consider class numbers and numbers of
  linear orbits over $\FF_p$ for \itemph{primes} $p$, rather than over all $\FF_q$ for
  \itemph{prime powers} $q$; see Lemma~\ref{lem:Nxp} and the comments that follow
  it.
\end{rem}

\paragraph{Geometric bounds.}
For an upper bound on the complexity of the counting functions of~$q$ in
Question~\ref{qu:motivation}, the orbit-counting lemma from elementary group
theory shows that understanding either one of $\concnt(G(\FF_q))$ and
$\cnt(\FF_q^n/G(\FF_q))$ is at most as hard as enumerating $\FF_q$-rational
points on schemes of finite type over~$\ZZ$; cf.\ Corollary~\ref{cor:oct}.

\paragraph{Main result: wildness mod $q^n$.}
Given that the study and enumeration of rational points on schemes is an
extraordinarily challenging and deep task, one might wonder whether the upper
bound on the complexity of the functions of $q$ in
Question~\ref{qu:motivation} provided by the preceding paragraph
truly reflects the actual difficulty of enumerating orbits of unipotent
groups.
Our main result (Theorem~\ref{thm:main_congruence}) shows that it does---after a fashion.

As explained in \cite[\S 2.4]{cico} (and partially recalled in
\S\ref{ss:classes_via_ask}), each alternating bilinear map $*\colon
\ZZ^d\times \ZZ^d \to \ZZ^e$ gives rise to an associated \itemph{Baer group
  scheme} $G_*$.
This is a unipotent group scheme of class at most $2$ with underlying scheme
$\AA^{d+e}$.
The group $G_*(\ZZ)$ is torsion-free, nilpotent of class at most $2$,
and it has Hirsch length $d+e$.

\begin{thmabc}
  \label{thm:main_congruence}
  Let $Y$ be a scheme of finite type over $\ZZ$.
  Let $n\ge 1$.
  Then there are finitely many commutative group schemes $M_1,\dotsc,M_r$ with $M_i
  \le \Uni_{d_i}$, Baer group schemes $G_1,\dotsc,G_r$, 
  and univariate Laurent polynomials
  $f_1(X),\dotsc,f_r(X),g_1(X),\dotsc,g_r(X) \in \ZZ[X^{\pm 1}]$ such that for
  each prime power $q$, the numbers 
  $F(q) := \sum\limits_{i=1}^r f_i(q) \concnt(G_i(\FF_q))$
  and $G(q) := \sum\limits_{i=1}^r g_i(q) \cnt(\FF_q^{d_i}/M_i(\FF_q))$
  are integers which satisfy
  \begin{equation}
    \cnt\! Y(\FF_q) \equiv F(q) \equiv G(q) \pmod {q^n}.
    \label{eq:main_congruence}
  \end{equation}
\end{thmabc}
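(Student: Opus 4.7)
The plan is to route the proof through a single intermediate invariant: the ``ask'' quantities (average sizes of kernels) of families of matrices over $\FF_q$. Two complementary dictionaries exist between these quantities and the two sides of the theorem. On the one hand, for a commutative group scheme $M \le \Uni_{d}$, Burnside's lemma gives
\[
  \cnt(\FF_q^{d}/M(\FF_q)) \;=\; \frac{1}{\abs{M(\FF_q)}} \sum_{m \in M(\FF_q)} \abs{\Ker(m - 1_{d})},
\]
so linear orbit counts are literally average kernel sizes of the family $\{m - 1_d\}_{m \in M(\FF_q)}$. On the other hand, the Baer correspondence recalled in~\S\ref{ss:classes_via_ask} (and developed in~\cite{cico}) identifies $\concnt(G_*(\FF_q))$ with an analogous average kernel size built from the matrix of the alternating form~$*$. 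Thus, starting from any identity expressing $\cnt Y(\FF_q) \bmod q^n$ as a $\ZZ[X^{\pm 1}]$-combination of average kernel sizes of $\ZZ$-defined matrix families, one can mechanically produce both sides of~\eqref{eq:main_congruence} by inverting these dictionaries and absorbing the denominators $\abs{M_i(\FF_q)}$ and $\abs{G_i(\FF_q)}$ (each polynomial in $q$) into the coefficients $g_i$ and $f_i$.

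The core of the proof is therefore the reduction of $\cnt Y(\FF_q) \bmod q^n$ to a $\ZZ[X^{\pm 1}]$-linear combination of ask numbers of matrices of linear forms over $\ZZ$. The plan is to embed $Y \hookrightarrow \AA^{N}$, stratify into locally closed affine pieces, and on each piece express $\cnt Y(\FF_q)$ by a counting identity that turns the vanishing of the defining polynomials into a kernel condition of an explicit matrix of linear forms in auxiliary ``test'' variables: for each $y \in \AA^{N}(\FF_q)$, the contribution to $\cnt Y(\FF_q)$ is controlled by the dimension of the space of linear functionals vanishing on the specialised defining ideal at~$y$. Higher-order contributions, which cost at least $n$ powers of $q$, are then truncated, leaving a finite ask computation that depends only on the order-$n$ jet data of~$Y$.

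The principal obstacle is precisely this last reduction: producing, from a general $Y$ and integer $n$, a concrete finite family of matrices of linear forms whose ask numbers modulo $q^n$ recover $\cnt Y(\FF_q)$. The $q^n$-adic slack is indispensable---without it one would be claiming that $\cnt Y(\FF_q)$ itself is a polynomial combination of ask numbers, which is already false for the simplest non-linear varieties---and this should also be the step that requires genuinely new input beyond the orbit- and class-counting formalism of~\cite{cico}. Once this rigidification of arbitrary $\FF_q$-point counts in terms of linear ask data is in hand, the two dictionaries of the first paragraph produce $F(q)$ and $G(q)$ automatically and both congruences in~\eqref{eq:main_congruence} follow.
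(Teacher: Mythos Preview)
Your first paragraph is correct in spirit and matches the paper: the two ``dictionaries'' (orbit counts $\leftrightarrow$ ask numbers via Burnside, and class numbers of Baer group schemes $\leftrightarrow$ ask numbers of alternating forms) are exactly Propositions~\ref{prop:orbits_via_ask}/\eqref{eq:ask_as_orbits} and~\ref{prop:alt_ask_via_k}, and the denominators you mention are indeed powers of $q$ and can be absorbed into Laurent coefficients.

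The gap is your second and third paragraphs. You correctly isolate the hard step---expressing $\cnt Y(\FF_q)\bmod q^n$ as a $\ZZ[X^{\pm 1}]$-combination of ask numbers of matrices of \emph{linear} forms---but you do not actually carry it out. The sketched mechanism (``turn the vanishing of the defining polynomials into a kernel condition of an explicit matrix of linear forms in auxiliary test variables, then truncate'') does not work as stated: the defining polynomials of $Y$ are in general nonlinear, and there is no direct way to encode their vanishing as a kernel condition for a matrix of linear forms. Linearising via test variables produces bilinear (or worse) conditions, not linear ones, and the ``truncation'' you invoke has no obvious meaning here since there is no visible filtration whose graded pieces are ask numbers.

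The paper resolves this step by importing a deep theorem of Belkale and Brosnan~\cite{BB03}: over a suitable localisation of $\ZZ[\q]$, every $\cntsymb Y$ is a linear combination of the maximal-rank-locus counts $\cntsymb V^\gamma_\max$, where $\gamma$ runs over the immersive module representations attached to graphs. The $q^n$-adic approximation then enters via the elementary observation (Corollary~\ref{cor:limit}) that $\q^\ell\cdot\asksymb({}^m\theta)\to \cntsymb V^\theta_\max$ as $m\to\infty$ in the $\q$-adic topology, so taking $m$ large enough gives the congruence modulo $q^n$. Your proposal is missing an ingredient of this strength; without Belkale--Brosnan or an equivalent geometric input, the reduction you describe cannot be completed.
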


Our proof of Theorem~\ref{thm:main_congruence} will combine a deep result
due to Belkale and Brosnan~\cite{BB03} and machinery developed in
\cite{ask,ask2,cico}. 
The author does not know if the congruences in~\eqref{eq:main_congruence} can be
elevated to honest equations (perhaps at the cost of considering a larger
ring of rational functions than $\ZZ[X^{\pm 1}]$ for the coefficients $f_i(X)$
and $g_i(X)$).
Theorem~\ref{thm:main_congruence} is however strong enough to establish that
the class numbers and numbers of linear orbits in
Theorem~\ref{thm:main_congruence} are as geometrically ``wild'' as possible.
Indeed, given any scheme $Y$ of finite type over $\ZZ$, there exists $n\ge 1$
such that $\cnt\! Y(\FF_q) < q^n$ for all prime powers $q$.
Equation~\eqref{eq:main_congruence} therefore allows us to recover
the precise number $\cnt\! Y(\FF_q)$ from expressions
involving Laurent polynomials in $q$ as well as the class numbers
$\concnt(G_i(\FF_q))$ and numbers of linear orbits
$\cnt(\FF_q^{d_i}/M_i(\FF_q))$.

\begin{rem}
  Generalising work of Boston and Isaacs~\cite[Thm~3.2]{BI04},
  O'Brien and Voll~\cite[Thm~4.1]{O'BV15} showed that every sufficiently
  generic Pfaffian hypersurface $H$ over $\ZZ$ gives rise to a unipotent group
  scheme $G$ such that $\concnt(G(\FF_q))$ depends in an explicit form on
  $\cnt\! H(\FF_q)$ (for odd $q$).
  All explicit examples of unipotent group schemes $G$ for which
  $\concnt(G(\FF_q))$ depends ``geometrically'' on $q$ known to the author are
  essentially of this form.
\end{rem}

\begin{rem}
  While this will play no role in our proof of Theorem~\ref{thm:main_congruence},
  we note that said theorem reduces to the case that $Y$ is affine.
  That is, we may assume that $Y =
  \Spec(\ZZ[X_1,\dotsc,X_\ell]/(f_1,\dotsc,f_r))$, where $f_1,\dotsc,f_r\in
  \ZZ[X_1,\dotsc,X_\ell]$.
  In that case,
  $\cnt\! Y(\FF_q) = \cnt\{ x\in \FF_q^\ell : f_1(x) = \dotsb = f_r(x) = 0\}$.

  To reduce Theorem~\ref{thm:main_congruence} to the affine case, we
  may proceed as in the proof of \cite[Prop.\ 3.5.3]{CLNS18}.
  Namely, suppose that Theorem~\ref{thm:main_congruence} holds for affine $Y$.
  By subtraction, it then also holds in the quasi-affine case.
  For general $Y$, consider a finite cover $Y = \bigcup_{i\in I} Y_i$ in which
  each $Y_i$ is an open affine subscheme of $Y$.
  Writing $Y_J = \bigcap_{j\in J} Y_j$ for $\emptyset\not= J \subset I$,
  each $Y_J$ is then quasi-affine and the claim follows
  from the inclusion-exclusion principle.
\end{rem}

\begin{rem}
  In our proof of Theorem~\ref{thm:main_congruence},
  the group schemes $M_1,\dotsc,M_r$ and $G_1,\dotsc,G_r$
  will generally depend on $n$.
  However, we will see in Corollary~\ref{cor:r} that the number $r$ can be
  chosen to be independent of $n$.
\end{rem}

\paragraph{Related counting problems.}
There are numerous natural enumerative
problems in which, for each prime $p$ or prime power $q$, we seek to count
objects up to a suitable equivalence relation.
For example, given a group scheme $G$ acting on a scheme $X$, we may consider
the numbers $\card{X(\FF_q)/G(\FF_q)}$; this is the theme of the present
paper.
Other counting problems in the literature are of a less (obviously)
geometric origin.
In particular, let $f(n,p)$ denote the number of isomorphism classes of groups
of order $p^n$.
The study of $f(n,p)$ as a function of $p$ (for fixed $n$) is precisely the
subject of Higman's famous and open \emph{PORC conjecture}~\cite{Hig60b}:
it predicts that for fixed $n$, the function $p\mapsto f(n,p)$ is polynomial
on residue classes (relative to a suitable modulus).
This conjecture has been verified for $n\le 7$; see \cite{O'BVL05} as well as
\cite{BEO'B02} and the references therein.

Suspicions that Higman's PORC conjecture might be false have been expressed,
see~\cite{dSVL12}, in particular.
In \cite{dSVL12}, this was based on the authors's prediction that
certain non-PORC functions are ``extremely unlikely'' to add up to PORC 
functions.
(We note that while \cite{cico} is concerned with very different enumerative
problems, \cite[Cor.\ 1.2]{cico} and \cite[Thm~0.5]{BB03} together show that
thanks to cancellations, natural examples of highly non-PORC functions can
indeed add up to polynomials.)
Du Sautoy~\cite[Cor.\ 1.9]{dS00} showed that for fixed $n$, the
number $f(n,p)$ is expressible in terms of the numbers of $\FF_p$-rational
points of schemes over $\ZZ$.
One might well suspect that this (rather than the PORC conjecture) captures
the true general shape of the functions $p\mapsto f(n,p)$.

While Theorem~\ref{thm:main_congruence} does not constitute a contribution
towards resolving Higman's PORC conjecture either way, it does suggest
that natural group-theoretic counting problems can indeed be as wild
as arithmetic geometry allows;
cf.\ \cite[Problem 4.3]{dSG06}.

\paragraph{Low nilpotency class dominates.}
Theorem~\ref{thm:main_congruence} allows us to approximate the numbers $\cnt\!
Y(\FF_q)$ in terms of quantities derived from unipotent group schemes of class at
most~$2$.
As we will now indicate, it is perhaps no coincidence that higher nilpotency
class plays no role here.
Indeed, let $G \le \Uni_n\otimes \QQ$ be a unipotent algebraic group over $\QQ$.
Then the following proposition shows that (up to an increase of dimensions and the
exclusion of finitely many exceptional characteristics), the enumeration
of the linear orbits and conjugacy classes of $G(\FF_q)$ reduces to the case
that $G$ has nilpotency class at most $2$.

\begin{propabc}
  \label{prop:reduction}
  Let $G \le \Uni_n\otimes \QQ$ be a unipotent algebraic group of dimension
  $d$ over $\QQ$.
  \begin{enumerate}[(i)]
  \item
    \label{prop:reduction1}
    There exist a commutative algebraic group $M \le \Uni_{2n}\otimes \QQ$
    of dimension $d$ over $\QQ$ and an integer $u \ge 1$ such that
    $\cnt(\FF_q^n/G(\FF_q)) = q^{-n} \cnt(\FF_q^{2n}/M(\FF_q))$
    for all prime powers $q$ with $\gcd(q,u) = 1$.
  \item
    \label{prop:reduction2}
    \textup{(Cf.\ \cite[Prop.\ 2.5]{cico}).}
    There exist a unipotent algebraic group $H$
    of class at most $2$ and dimension $3d$ over $\QQ$ and an integer $v\ge 1$
    such that $\concnt(G(\FF_q)) = q^{-d} \concnt(H(\FF_q))$
    for all prime powers $q$ with $\gcd(q,v) = 1$.
  \end{enumerate}
\end{propabc}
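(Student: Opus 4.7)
For part~\eqref{prop:reduction1}, the strategy is to take $M$ to be the additive group of the Lie algebra of $G$, embedded in $\Uni_{2n}$ as a group of block matrices, and to translate between $G$-orbits and $\fg$-orbits on $\FF_q^n$ via the exponential map. Explicitly, let $\fg \subset \fn_n \otimes \QQ$ denote the Lie algebra of $G$, regarded as a $d$-dimensional $\QQ$-subspace of the strictly upper triangular $n \times n$ matrices, and take $M \le \Uni_{2n} \otimes \QQ$ to be the closed subgroup scheme consisting of block matrices $\bigl(\begin{smallmatrix} I_n & X \\ 0 & I_n \end{smallmatrix}\bigr)$ with $X \in \fg$. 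The map $X \mapsto \bigl(\begin{smallmatrix} I_n & X \\ 0 & I_n \end{smallmatrix}\bigr)$ is a homomorphism from the additive group $\fg$, so $M$ is commutative of dimension~$d$.

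A direct computation shows that the $M(\FF_q)$-orbit of $(w,v) \in \FF_q^n \oplus \FF_q^n = \FF_q^{2n}$ equals $(w + \fg(\FF_q) v) \times \{v\}$, so
\[
  \cnt(\FF_q^{2n}/M(\FF_q)) = \sum_{v \in \FF_q^n} q^n/\card{\fg(\FF_q) \cdot v}.
\]
Combined with the orbit-counting identity $\cnt(\FF_q^n / G(\FF_q)) = \sum_{v} 1/\card{G(\FF_q) \cdot v}$, the assertion reduces to showing that $\card{G(\FF_q) \cdot v} = \card{\fg(\FF_q) \cdot v}$ for every $v$, provided $\gcd(q,u) = 1$ for a suitable~$u$. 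This is where the only real technical point arises: after spreading out to a $\ZZ[1/u]$-model one may enlarge~$u$ so that, for every $\FF_q$ with $\gcd(q,u) = 1$, the truncated exponential defines a scheme-theoretic bijection $\exp\colon \fg(\FF_q) \xrightarrow{\sim} G(\FF_q)$. Since each $X \in \fg(\FF_q)$ acts nilpotently on $\FF_q^n$, the truncated series for $\exp$ and $\log$ express $X$ and $\exp(X)-I$ as polynomials in one another, yielding $\exp(X)v = v$ if and only if $Xv = 0$. Hence the stabiliser of $v$ in $G(\FF_q)$ is in bijection with $\{X \in \fg(\FF_q) : Xv = 0\}$, giving the required equality of orbit sizes.

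Part~\eqref{prop:reduction2} is the content of \cite[Prop.~2.5]{cico}, to which we refer directly; the essential idea is to construct $H$ from a Heisenberg-type double built out of the adjoint/coadjoint action of $\fg$ on itself (explaining the jump from $\dim G = d$ to $\dim H = 3d$) and to extract the factor $q^d$ via a Kirillov-orbit-method style reduction. In both parts, I expect the principal obstacle to be a careful bookkeeping of the finite set of bad primes to be excluded, so that the exponential/logarithm correspondence is integrally defined and compatible with the linear action on $\FF_q^n$; this is precisely what necessitates the coprimality hypotheses $\gcd(q,u) = 1$ and $\gcd(q,v) = 1$.
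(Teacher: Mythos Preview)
Your proof is correct and constructs the same group $M$ as the paper does, namely the additive group of $\fg$ embedded via block matrices $\bigl(\begin{smallmatrix} I_n & X \\ 0 & I_n\end{smallmatrix}\bigr)$ (in the paper's notation, $\mathsf{M}_\iota$ for the inclusion $\iota\colon \fg \hookrightarrow \Mat_n(\ZZ)$). The difference is one of packaging: the paper routes through the ``average size of kernels'' invariant, first establishing $\orbsymb(\AA^n_\QQ/G_0) = \asksymb(\iota)$ in $\cQo$ (Proposition~\ref{prop:orbits_via_ask}, which itself rests on the exponential bijection you invoke) and then $\orbsymb(\AA^{2n}/\mathsf{M}_\iota) = \q^n \asksymb(\iota)$ (equation~\eqref{eq:ask_as_orbits}); your direct orbit count simply unpacks these two facts, and indeed your intermediate expression $\sum_v 1/\card{\fg(\FF_q) v}$ is precisely $\ask(\iota^{\FF_q})$ after a Fubini swap. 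For part~(ii), rather than merely citing \cite{cico}, the paper (Lemma~\ref{lem:red2}) supplies a short self-contained argument: take $H$ to be the Baer group scheme $\mathsf{G}_{\ad_\fg}$ attached to the adjoint representation and combine the identities $\consymb(G_0) = \asksymb(\ad_\fg)$ (Proposition~\ref{prop:k_via_ask_ad}) and $\consymb(\mathsf{G}_{\ad_\fg}) = \q^d \asksymb(\ad_\fg)$ (Proposition~\ref{prop:alt_ask_via_k}).
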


Proposition~\ref{prop:reduction}\ref{prop:reduction2} follows from
\cite[Prop.\ 2.5]{cico}, a result pertaining to class-counting zeta
functions of unipotent group schemes.
One motivation for writing this article was to illustrate how some of the
machinery surrounding ``ask zeta functions'' developed in \cite{ask,ask2,cico}
can be applied to enumerative problems involving groups over finite fields.

\paragraph{Overview.}
In \S\ref{s:arithmetic_1}, we construct rings which we will use to
investigate counting problems depending on prime powers $q$.
The counting problems that we consider give rise to elements of rings $\cQ$
and~$\cQo$.
The former of these rings embeds as a dense subring into a certain topological
ring $\cQ_\q$.
In \S\ref{s:ask}, we mostly collect known material related to average sizes of
kernels and we explain how this relates to the enumeration of orbits of
unipotent groups.
This, in particular, gives rise to chains of submodules of $\cQ$ and $\cQ_\q$.
Finally, in \S\ref{s:proof}, we prove Theorem~\ref{thm:main_congruence}.
In fact, we deduce the latter theorem from Theorem~\ref{thm:main_density}
which asserts that several of the aforementioned submodules of $\cQ_\q$
all have the same~closure.

\subsection*{\textit{Notation}}

The symbol ``$\subset$'' indicates not necessarily proper inclusion.
Rings are assumed to be commutative and unital.
Schemes (including group schemes) are assumed to be separated and of finite
type over their bases, the default of the latter being $\ZZ$.
Maps and groups usually act on the right.
Throughout, $\Primes$ denotes the set of all rational primes and $\PrimePowers
= \{ p^e : p\in\Primes, e\ge 1\}$ the set of prime powers.

\section{Rings from counting problems}
\label{s:arithmetic_1}

This section takes inspiration from the combinatorial motives of Belkale and
Brosnan~\cite{BB03} and Huber's affinoid rings~\cite{Hub96}.

\subsection[Constructing Z and Q]{Constructing $\cZx$ and $\cQx$}

Define $\cZ$ to be the subring of the product $\prod_\PrimePowers \ZZ$
consisting of elements $x\in \prod_\PrimePowers \ZZ$ 
that are polynomially bounded in the sense that given $x$,
there exists a constant $d\ge 0$ with $\abs{x_q} \le q^d$ for all
$q\in\PrimePowers$.
For example, if $X$ is a scheme,
then $\cntsymb\! X := (\card{X(\FF_q)})_{q\in\PrimePowers}$ belongs to $\cZ$.
(Recall that schemes are assumed to be of finite type.)
Write $\q = \cntsymb\!\AA^1 = (q)_{q\in\PrimePowers} \in \cZ$.
If $F$ is any set-valued functor on the category of rings which
is either a sub- or quotient functor of $X(\cdot)$ for a scheme $X$, then 
$\cntsymb\! F := (\card{F(\FF_q)})_{q\in\PrimePowers} \in \cZ$.
For example, if $G$ is a group scheme acting on a scheme~$X$, then the naive
orbit space functor $X/G$ given by $R\leadsto X(R)/G(R)$ yields an element
$\cntsymb(X/G) = (\card{X(\FF_q)/G(\FF_q)})_{q\in\PrimePowers} \in \cZ$.
As a special case, for any group scheme $G$, we write $\consymb (G) =
(\concnt(G(\FF_q)))_{q\in\PrimePowers}\in \cZ$.

\paragraph{Discarding small primes.}
We will often ignore finitely many potentially exceptional characteristics.
To that end, consider the ideal $\cI$ of $\cZ$ given by
\[
  \cI = \Bigl\{ x\in \cZ : \exists N \ge 1\, \forall q\in \PrimePowers\, \bigl(\gcd(q,N)
  = 1 \to x_q = 0\bigr)\Bigr\}.
\]
Define $\cZo = \cZ/\cI$.
Hence, $x,y\in \cZ$ are identified in $\cZo$ if and only if
$x_{p^e} = y_{p^e}$ for almost all (i.e.\ for all but finitely many)
primes $p\in\Primes$ and all $e \ge 1$.
By abuse of notation, we often use the same symbol for an element of $\cZ$ and
for its image in~$\cZo$.
Henceforth, a superscript ``$*$'' refers to either ``$\flat$'' or the
absence of a decoration, the convention being that ``$*$'' has the
same meaning within a given statement.
For instance, $\q\in \cZx$ is algebraically independent over $\ZZ$
and $\cZx$ embeds into the localisation $\cQx := \cZx[\q^{-1}] =
\cZx\otimes_{\ZZ[\q]}\ZZ[\q^{\pm 1}]$.
Note that we may view $\cQ$ as a subring of $\prod_\PrimePowers \QQ$.

\begin{lemma}
  $\bigcap_{n=1}^\infty \q^n\cZx = 0$.
\end{lemma}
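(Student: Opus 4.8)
The plan is to use polynomial boundedness to collapse the infinite intersection to a single divisibility condition. Suppose $x$ lies in $\bigcap_{n=1}^\infty \q^n\cZx$; in the $\flat$ case fix a representative $x\in\cZ$. By the definition of $\cZ$ there is some $d\ge 0$ with $\abs{x_q}\le q^d$ for all $q\in\PrimePowers$.

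I would then concentrate on the single membership $x\in\q^{d+1}\cZx$. Write $x=\q^{d+1}y$ for some $y\in\cZx$, picking a representative $y\in\cZ$ in the $\flat$ case. For $\cZx=\cZ$ this means $x_q=q^{d+1}y_q$ for every $q\in\PrimePowers$; for $\cZx=\cZo$ it means only that $x_q=q^{d+1}y_q$ holds for all $q$ coprime to some fixed modulus $N\ge 1$, coming from the description of the ideal $\cI$. The key point is that, since only the one exponent $n=d+1$ is used, this $N$ is a single constant, not something varying with $n$.

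The remaining step is elementary arithmetic in $\ZZ$: since $q\ge 2$, one has $q^d<q^{d+1}$, so $0$ is the only integer in the interval $[-q^d,q^d]$ divisible by $q^{d+1}$. Combined with $\abs{x_q}\le q^d$, the relation $q^{d+1}\mid x_q$ forces $x_q=0$ --- for every $q$ in the plain case, and for every $q$ coprime to $N$ in the $\flat$ case. Thus $x=0$ in $\cZ$, respectively $x\in\cI$ so that $x=0$ in $\cZo$. In either case $x=0$ in $\cZx$, which is the assertion.

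The only subtle point --- the ``hard part'', such as it is --- is the $\flat$ case: the modulus witnessing $x\equiv\q^n y\pmod{\cI}$ could in principle grow with $n$, and one cannot sensibly intersect infinitely many such coprimality conditions. Invoking polynomial boundedness to reduce to the single exponent $n=d+1$ is exactly what removes this difficulty; no deeper input is required. (In the plain case an even cheaper argument is available: fixing any $q$, divisibility of $x_q$ by $q^n$ for all $n$ already forces $x_q=0$.)
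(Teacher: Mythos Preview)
Your proof is correct and follows essentially the same approach as the paper: use polynomial boundedness of a representative to reduce to the single membership $x\in\q^{d+1}\cZx$, then conclude from $\abs{x_q}\le q^d$ and $q^{d+1}\mid x_q$. The paper handles the undecorated case by the quick argument you mention in your final parenthetical, but this is a cosmetic difference.
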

\begin{proof}
  First note that $\bigcap_{n=1}^\infty \q^n\cZ \subset \bigcap_{n=1}^\infty
  \prod_{q\in \PrimePowers} q^n\ZZ = 0$.
  Let $x\in \cZ$ and suppose that $x + \cI \in \bigcap_{n=1}^\infty \q^n\cZo$.
  Since $x\in\cZ$, there exists $d\ge 1$ such that $\abs{x_q} \le q^d$ for all
  $q \in \PrimePowers$.
  As $x + \cI \in \q^{d+1}\cZo$, there exists $N\ge 1$ such that
  $\divides{q^{d+1}} {x_q}$ for all $q\in\PrimePowers$ with $\gcd(q,N) = 1$.
  We conclude that $x_q = 0$ whenever $\gcd(q,N) = 1$ whence $x \in \cI$.
  Thus, $\bigcap_{n=1}^\infty \q^n\cZo = 0$.  
\end{proof}

\subsection{Integral forms and the enumeration of orbits}

Let $X_0$ be a scheme over $\QQ$.
Choose an integral form $X$ of $X_0$.
That is, choose a scheme $X$ with $X\otimes \QQ \approx X_0$ over $\QQ$.
(The existence of such an $X$ follows by \itemph{spreading out};
see e.g.\ \cite[\S 3.2]{Poo17}.)
While $\cntsymb\! X \in \cZ$ depends on the choice of~$X$, the image of
$\cntsymb\! X$ in $\cZo$ only depends on $X_0$.
(Indeed, another spreading out argument shows that given integral forms $X$
and $X'$ of $X_0$, we obtain an isomorphism $X\otimes
\ZZ[1/N]\approx X'\otimes \ZZ[1/N]$ over $\ZZ[1/N]$ for some $N\ge 1$.
We then have $\card{X(\FF_q)} = \card{X'(\FF_q)}$ for all $q\in\PrimePowers$
with $\gcd(q,N) = 1$
whence $\cntsymb\! X = \cntsymb\! X'$ in $\cZo$.)
We let $\cntsymb\! X_0$ denote the image of $\cntsymb\! X$ in $\cZo$.

Let $G_0 \leqslant \GL_n\otimes \QQ$ be a linear algebraic group over $\QQ$.
The scheme-theoretic closure $G$ of $G_0$ in $\GL_n$ is an integral form of
$G_0$ as a group scheme; see e.g.\ \cite[\S 1]{Fom97}.
Define $\cntsymb(\AA^n_\QQ / G_0)$
(resp.\ $\consymb(G_0)$) to be the 
image of $\cntsymb(\AA^n/G)$ (resp.\ $\consymb(G)$) in $\cZo$.
Note that $\consymb(G_0)$ does not depend on the choice of the embedding of
$G_0$ into $\GL_n\otimes \QQ$.

\subsection{Counting orbits: geometric perspectives}

Let $G$ be a group scheme acting on a scheme $X$.
While the naive orbit space functor $X/G$ is hardly ever
(represented by) a scheme, the point-counting function $q\mapsto
\card{X(\FF_q)/G(\FF_q)}$ \textit{does} behave somewhat similarly to the one
attached to a scheme.
To see this, let $X \opair G$ be the closed subscheme of $X\times G$
representing $R \leadsto \{ (x,g) \in X(R)\times G(R) : xg = x\}$.

\begin{lemma}
  \label{lem:oct}
  $\card{X(R)/G(R)}
  = \card{(X \opair G)(R)} \cdot {\card{G(R)}}^{-1}$
  for each finite ring $R$.
\end{lemma}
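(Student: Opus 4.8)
The statement is a form of Burnside's (orbit-counting) lemma, so the plan is to apply it and repackage the result scheme-theoretically. First I would recall that for a finite group $G(R)$ acting on the finite set $X(R)$, Burnside's lemma gives
\[
  \card{X(R)/G(R)} = \frac{1}{\card{G(R)}}\sum_{g\in G(R)} \card{\{x\in X(R) : xg = x\}}.
\]
The key observation is then simply that the sum on the right counts pairs:
\[
  \sum_{g\in G(R)} \card{\{x\in X(R) : xg = x\}} = \card{\{(x,g)\in X(R)\times G(R) : xg = x\}},
\]
and by the defining property of the closed subscheme $X\opair G$ representing the functor $R\leadsto \{(x,g)\in X(R)\times G(R) : xg=x\}$, the latter cardinality is exactly $\card{(X\opair G)(R)}$. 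Combining these two displays and dividing by $\card{G(R)}$ yields the asserted identity.

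The one point requiring a word of care is that the excerpt allows $R$ to be an arbitrary finite ring, not a field, so $X$ and $G$ need not behave like varieties; but this is a non-issue, because Burnside's lemma is purely a statement about a finite group acting on a finite set, and $X(R)$ and $G(R)$ are finite sets (as $X$ and $G$ are of finite type over $\ZZ$ and $R$ is finite) with $G(R)$ a group acting on $X(R)$ via the action of $G$ on $X$. Likewise, $(X\opair G)(R)$ is finite and, by the Yoneda-type representability built into the definition of $X\opair G$, is in natural bijection with the fibre product set $\{(x,g) : xg = x\}$. So no geometric input beyond finiteness is needed.

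There is essentially no obstacle here; the "hard part," such as it is, is merely bookkeeping — confirming that the functor-of-points description of $X\opair G$ as a closed subscheme of $X\times G$ is precisely the fixed-point locus appearing in Burnside's formula, which is immediate from how $X\opair G$ was introduced just before the lemma. I would therefore write the proof in two lines: invoke Burnside's lemma, rewrite the orbit-counting sum as a count over the representing scheme $X\opair G$, and divide.
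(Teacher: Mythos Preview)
Your proposal is correct and matches the paper's approach exactly: the paper's proof is the single line ``This follows immediately from the orbit-counting lemma.'' Your unpacking of that line---Burnside's formula plus the identification of the fixed-point sum with $\card{(X\opair G)(R)}$ via the representing functor---is precisely what is intended, and your remark about arbitrary finite rings being a non-issue is apt.
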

\begin{proof}
  This follows immediately from the orbit-counting lemma.
\end{proof}

\begin{cor}
  \label{cor:oct}
  Let $G$ be a group scheme acting on a scheme $X$.
  Suppose that $G\otimes \QQ$ is a
  unipotent algebraic group of dimension~$d$ over~$\QQ$.
  Then $\cntsymb(X/G) = \cntsymb(X \opair G) \cdot \q^{-d}$ in~$\cQo$.
\end{cor}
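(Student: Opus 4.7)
The plan is to combine Lemma~\ref{lem:oct}, applied fibrewise at every prime power $q$, with the observation that a unipotent algebraic group of dimension $d$ over $\QQ$ has exactly $q^d$ rational points over $\FF_q$ for all but finitely many prime powers $q$.

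First, I would evaluate Lemma~\ref{lem:oct} at $R = \FF_q$ and clear denominators to obtain
$$\card{X(\FF_q)/G(\FF_q)} \cdot \card{G(\FF_q)} = \card{(X \opair G)(\FF_q)}$$
for every $q\in\PrimePowers$. Reading this componentwise gives the identity $\cntsymb(X/G) \cdot \cntsymb(G) = \cntsymb(X \opair G)$ in $\cZ$, and a fortiori in $\cZo$.

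Next, I would establish that $\cntsymb(G) = \q^d$ in $\cZo$. Since the generic fibre $G\otimes\QQ$ is a unipotent algebraic group over a field of characteristic zero, it is smooth and connected, and the exponential map from its Lie algebra provides an isomorphism of $\QQ$-varieties $G\otimes\QQ \approx \AA_{\QQ}^d$. Spreading out (as in \cite[\S 3.2]{Poo17}), one obtains an integer $N\ge 1$ and an isomorphism $G\otimes\ZZ[1/N] \approx \AA^d \otimes \ZZ[1/N]$ of $\ZZ[1/N]$-schemes. In particular, $\card{G(\FF_q)} = q^d$ whenever $\gcd(q,N) = 1$, which is exactly what is required to conclude that $\cntsymb(G)$ and $\q^d$ coincide in $\cZo$.

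Combining the two displays yields $\cntsymb(X/G) \cdot \q^d = \cntsymb(X\opair G)$ in $\cZo$. Passing to $\cQo$, in which $\q$ is invertible, and dividing by $\q^d$ produces the desired identity $\cntsymb(X/G) = \cntsymb(X\opair G) \cdot \q^{-d}$. The only real substance here is the spreading-out step for the equality $\cntsymb(G) = \q^d$: this is where the hypothesis that $G\otimes\QQ$ is unipotent (and not just smooth of dimension $d$) is used, and where the finitely many exceptional characteristics are discarded. Everything else is a formal manipulation inside $\cQo$.
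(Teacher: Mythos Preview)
Your proof is correct and follows essentially the same approach as the paper. The paper's proof is a single line citing \cite[Ch.~IV, \S 2, Prop.~4.1]{DG70} for the isomorphism $G\otimes\QQ \approx \AA^d_\QQ$ of $\QQ$-schemes; you have simply made explicit the spreading-out and the passage from Lemma~\ref{lem:oct} to the identity in $\cQo$ that the paper leaves to the reader.
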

\begin{proof}
  $G\otimes \QQ \approx \AA^d_{\QQ}$ as schemes over $\QQ$; cf.\ \cite[Ch.\ IV, \S 2, Prop.\ 4.1]{DG70}.
\end{proof}

When our objective is to enumerate orbits of group scheme actions on the level
of rational points over finite fields, assuming that we are willing to ignore
finitely many exceptional characteristics, then it often turns out to be
enough to consider prime fields.
More formally, inspired by properties of the point-counting functions
$p\mapsto \card{Y(\FF_p)}$ attached to a scheme~$Y$ as described in
\cite[Ch.\ 1]{Ser12}, we observe the following.

\begin{lemma}
  \label{lem:Nxp}
  For each $i=1,\dotsc,r$,
  let $G_i$ be a group scheme acting on a scheme $Y_i$.
  Let $f_1(X),\dotsc,f_r(X) \in \QQ[X^{\pm 1}]$.
  For $q\in \PrimePowers$, define $F(q) = \sum\limits_{i=1}^r
  \card{Y_i(\FF_q)/G_i(\FF_q)} \cdot f_i(q)$.
  Suppose that $F(p) = 0$ for all $p\in P$,
  where $P \subset \Primes$ has natural density $1$.
  Then there exists $N\ge 1$ such that $F(q) = 0$ for all $q\in\PrimePowers$ with
  $\gcd(q,N) = 1$.
  That is, $(F(q))_{q\in\PrimePowers}$ vanishes in $\cQo$.
\end{lemma}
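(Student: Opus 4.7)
The plan is to reduce the problem to a statement about a $\ZZ$-linear combination of point counts of schemes vanishing on a density-$1$ set of primes, and then to invoke the general theory of the functions $p \mapsto \card{X(\FF_p)}$ developed in \cite[Ch.\ 1]{Ser12}.

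First, I would apply Lemma~\ref{lem:oct} to rewrite each orbit count as $\card{Y_i(\FF_q)/G_i(\FF_q)} \cdot \card{G_i(\FF_q)} = \card{(Y_i \opair G_i)(\FF_q)}$. Choose $N \ge 1$ and $D \ge 1$ so that $D X^N f_i(X) \in \ZZ[X]$ for every $i$, set $Z_i := (Y_i \opair G_i) \times \prod_{j \ne i} G_j$, and let $\Pi(q) := D q^N \prod_{j=1}^r \card{G_j(\FF_q)}$, which is a positive integer for every $q \in \PrimePowers$. Since the point count of a product of $\ZZ$-schemes is the product of point counts, clearing denominators in the defining expression for $F(q)$ yields
\[
  \Pi(q) F(q) = \sum_{i=1}^r \bigl(D q^N f_i(q)\bigr) \cdot \card{Z_i(\FF_q)}.
\]
Expanding $D q^N f_i(q) = \sum_k c_{ik} q^k$ with $c_{ik} \in \ZZ$ and using $q^k = \card{\AA^k(\FF_q)}$ finally gives
\[
  \Pi(q) F(q) = \sum_{m=1}^s a_m \card{X_m(\FF_q)},
\]
where the $X_m$ are schemes of finite type over $\ZZ$ (independent of $q$) and $a_m \in \ZZ$.

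By hypothesis, the left-hand side vanishes for every $p \in P$, so $\sum_m a_m \card{X_m(\FF_p)} = 0$ for all $p$ in a density-$1$ subset of $\Primes$. The key step is to upgrade this to $\sum_m a_m \card{X_m(\FF_{p^e})} = 0$ for almost all $p$ and every $e \ge 1$. This is precisely the $\ell$-adic Chebotarev phenomenon explained in \cite[Ch.\ 1]{Ser12}: for any prime $\ell$ and all but finitely many $p \ne \ell$, the Grothendieck--Lefschetz trace formula assembles the $H^j_c(X_{m,\bar\QQ},\QQ_\ell)$ into a virtual continuous $\ell$-adic representation $V$ of $\operatorname{Gal}(\bar\QQ/\QQ)$ with $\sum_m a_m \card{X_m(\FF_{p^e})} = \operatorname{tr}(\Phi_p^e \mid V)$ at every unramified prime $p$ and every $e \ge 1$. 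The hypothesis gives $\operatorname{tr}(\Phi_p \mid V) = 0$ on a density-$1$ set of unramified primes; by Chebotarev density and continuity of the trace on the compact image of $V$, the trace function of $V$ vanishes identically, whence $\operatorname{tr}(\Phi_p^e \mid V) = 0$ for all unramified $p$ and all $e \ge 1$.

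Consequently $\Pi(p^e) F(p^e) = 0$ for almost all $p$ and all $e \ge 1$, and since $\Pi(p^e) > 0$ this is equivalent to $F(p^e) = 0$ on the same set, which is what was to be proved. The only non-elementary ingredient is the density-upgrade step, which rests on $\ell$-adic cohomology and the Chebotarev density theorem and is the main obstacle; the remainder is bookkeeping to convert orbit counts into honest point counts of $\ZZ$-schemes and to absorb the Laurent-polynomial coefficients $f_i(q)$ into products with affine spaces.
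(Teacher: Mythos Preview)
Your proof is correct and follows essentially the same route as the paper: both define the schemes $Z_i = (Y_i \opair G_i) \times \prod_{j \ne i} G_j$, reduce the vanishing of $F$ to that of a $\ZZ[\q^{\pm 1}]$-combination of point counts of schemes, and then invoke Grothendieck's trace formula together with Chebotarev's density theorem to pass from a density-$1$ set of primes to all prime powers. The paper simply cites this last step (via \cite[Thm~2.3]{stability}) rather than spelling out the virtual-representation argument as you do.
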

\begin{proof}
  Let $Z_i = (Y_i\,\opair G_i) \times \prod_{j\not= i} G_j$
  and $\tilde F(q) = \sum_{i=1}^r \card{Z_i(\FF_q)} f_i(q)$.
  By Lemma~\ref{lem:oct} and since $G_i(\FF_q)\not= \emptyset$,
  for each $q\in \PrimePowers$, we have $F(q) = 0$ if and only if
  $\tilde F(q) = 0$.
  It thus suffices to establish the analogous claim for $\tilde F$.
  This follows by combining Chebotarev's density theorem and Grothendieck's
  trace formula; cf.~\cite[Thm 2.3]{stability}.
\end{proof}

We record two particular consequences:
\begin{enumerate}
\item
  Let $G_i$ act on $Y_i$ for $i = 1,2$.
  Suppose that $\card{Y_1(\FF_p)/G_1(\FF_p)} = \card{Y_2(\FF_p)/G_2(\FF_p)}$
  for almost all primes $p$.
  Then in fact $\card{Y_1(\FF_q)/G_1(\FF_q)} = \card{Y_2(\FF_q)/G_2(\FF_q)}$
  for all prime powers $q$ except perhaps for a finite number of exceptional
  characteristics.
  (For trivial actions, this is \cite[Thm~1.3]{Ser12}.)
\item
  Let $G$ act on $Y$ and suppose that
  $\card{Y(\FF_p)/G(\FF_p)}$ is given by a (rational) polynomial in~$p$ for
  almost all primes $p$.
  Then $\card{Y(\FF_q)/G(\FF_q)}$ too depends polynomially on $q$ except perhaps
  for a finite number of exceptional characteristics. 
\end{enumerate}

\begin{rem}
  For related geometric perspectives on the enumeration of conjugacy classes,
  see \cite[\S 2]{Rob98} and \cite[\S 2]{dS05}.
\end{rem}

\subsection[Completing Z and Q]{Completing $\cZ$ and $\cQ$}
\label{ss:topology}

Let  $\cZ_\q = \varprojlim \cZ/\q^n\cZ$ be the $\q$-adic completion of~$\cZ$.
As $\bigcap_{n=1}^\infty \q^n\cZ = 0$, we may regard $\cZ$ as a (dense)
subring of~$\cZ_\q$. 
Since $\q$ is a regular element of $\cZ_\q$, the latter ring embeds into
$\cQ_\q := \cZ_\q[\q^{-1}]$.
We regard $\cQ$ as a subring of $\cQ_\q$.
We topologise $\cQ_\q$ via the identification $\cQ_\q = \varinjlim (\cZ_\q
\xto\q \cZ_\q \xto\q \dotsb)$.
This turns $\cQ_\q$ into a topological ring which contains $\cZ_\q$ as an
open and closed subring.
The subspace topology of $\cZ_\q$ within $\cQ_\q$ coincides with our original
$\q$-adic topology on $\cZ_\q$.

As usual, for $p\in\Primes$, let $\ZZ_p$ and $\QQ_p$ denote the ring of
$p$-adic integers and field of $p$-adic numbers, respectively\footnote{Our
  construction of $\cZ_\q$ and $\cQ_\q$ is an imitation of the usual
construction of $\ZZ_p$ and $\QQ_p$, respectively.}.
For $e\ge 1$, we write $\ZZ_{p^e} = \ZZ_p$ and $\QQ_{p^e} = \QQ_p$;
note that $\ZZ_q = \varprojlim \ZZ/q^n\ZZ$ for $q\in\PrimePowers$.
Let $\cZ_n = \prod_{q\in\PrimePowers} \ZZ/q^n\ZZ$, regarded as a discrete ring.
As $\cZ_n \approx \cZ/\q^n$ naturally, we may identify $\cZ_\q =
\varprojlim \cZ_n$ as topological rings.
Since products commute with inverse limits (both being
categorical limits),
the natural embeddings $\cZ \incl \prod_\PrimePowers \ZZ$ and
$\cQ\incl \prod_\PrimePowers\QQ$ extend to a continuous isomorphism of
abstract rings (not a homeomorphism!)
$\cZ_\q \to \prod_{q\in\PrimePowers} \ZZ_q$ and a continuous ring monomorphism
$\cQ_\q \to \prod_{q\in\PrimePowers} \QQ_q, \,x \mapsto (x_q)_{q\in\PrimePowers}$,
respectively.

\begin{prop}
  $\cQ \cap \cZ_\q = \cZ$.
\end{prop}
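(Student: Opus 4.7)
The inclusion $\cZ \subset \cQ \cap \cZ_\q$ is clear, so the content is the reverse inclusion. My plan is to read off everything from the continuous ring monomorphism $\cQ_\q \to \prod_{q\in\PrimePowers}\QQ_q$ constructed at the end of \S\ref{ss:topology}: this map sends $\cZ_\q$ into $\prod_q \ZZ_q$, sends $\cZ$ into $\prod_q \ZZ$, and sends an element of $\cZ$ to its coordinate tuple. Because all of these maps are injective, intersections can be computed inside $\prod_q \QQ_q$.

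So let $\alpha \in \cQ\cap \cZ_\q$ and write $\alpha = x\q^{-m}$ for some $x\in \cZ$ and $m\ge 0$. Under the embedding into $\prod_q \QQ_q$, the element $\alpha$ corresponds to $(x_q/q^m)_{q\in\PrimePowers}$, where each $x_q\in\ZZ$. Since $\alpha \in \cZ_\q$, each coordinate $x_q/q^m$ lies in $\ZZ_q$; as $q = p^e$ and $x_q/q^m = x_q/p^{em}$ is a rational number, this forces $q^m \mid x_q$ in $\ZZ$. Define $y_q := x_q/q^m \in \ZZ$ and set $y = (y_q)_{q\in\PrimePowers}\in \prod_q \ZZ$.

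The remaining step is to verify that $y$ is polynomially bounded, so that $y\in \cZ$. This is immediate: choose $d\ge 0$ with $\abs{x_q}\le q^d$ for all $q$; then $\abs{y_q} \le q^{d-m}$, which is bounded by $q^{\max(d-m,0)}$ (in the case $d<m$ we even get $y_q = 0$ for all $q$). Thus $y\in \cZ$, and by construction $\q^m y = x$ in $\cZ$, so $\alpha = x\q^{-m} = y$ in $\cQ$ (equivalently in $\cQ_\q$, by injectivity). Hence $\alpha\in\cZ$, as desired.

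I do not foresee a genuine obstacle; the only point requiring some care is the bookkeeping for the various embeddings $\cZ\incl\cZ_\q\incl \cQ_\q$ and $\cZ_\q\to\prod_q\ZZ_q$, $\cQ_\q\to\prod_q\QQ_q$, and verifying that $\alpha\in\cZ_\q$ really forces the naive coordinate-wise divisibility $q^m\mid x_q$ in~$\ZZ$. Once that is in hand, the polynomial boundedness of $y$ is automatic from that of $x$.
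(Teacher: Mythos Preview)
Your proposal is correct and follows essentially the same route as the paper's proof: write an element of $\cQ\cap\cZ_\q$ as $x\q^{-m}$ with $x\in\cZ$, use the coordinate embedding into $\prod_q\QQ_q$ to deduce $q^m\mid x_q$ in $\ZZ$, and conclude. The paper's version is terser---it simply notes that $x_q/q^m\in\ZZ_q\cap\QQ$ forces $x_q/q^m\in\ZZ$ and leaves the polynomial-boundedness check implicit---whereas you spell that check out, but there is no substantive difference.
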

\begin{proof}
  Let $x \in \cQ\cap \cZ_\q$.
  Then there exists $m\ge 0$ and $y\in \cZ$ such that $x_q = y_q/q^m$ for all
  $q\in \PrimePowers$.
  As $x_q\in \ZZ_q$, we conclude that $x_q\in \ZZ$ for all
  $q\in\PrimePowers$.
  The converse is clear.
\end{proof}

We may view $\cZ_\q$ as $\prod_{q\in\PrimePowers}\ZZ_q$ endowed with a
suitable topology of uniform convergence:

\begin{lemma}
  \label{lem:closure}
  Let $U \subset \cQ_\q$ and $x\in \cZ_\q$.
  Then $x\in\bar U$ if and only if for each $n\ge 1$, there exists $u\in U\cap
  \cZ_\q$ such that $x_q \equiv u_q \pmod{q^n}$ for all $q\in \PrimePowers$.
\end{lemma}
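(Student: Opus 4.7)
The plan is to unwind the definitions and reduce the statement to a transparent ideal-theoretic claim inside $\cZ_\q$.

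First, I would invoke the description of the topology on $\cQ_\q$ given in \S\ref{ss:topology}: the subring $\cZ_\q \subset \cQ_\q$ is open and closed, and the subspace topology coming from $\cQ_\q$ agrees with the $\q$-adic topology on $\cZ_\q$. Consequently, a fundamental system of open neighbourhoods of $x$ in $\cQ_\q$ is given by the cosets $x + \q^n \cZ_\q$ for $n \ge 1$. Since $\cZ_\q$ is open in $\cQ_\q$, the closure of $U$ in $\cQ_\q$ and the closure of $U \cap \cZ_\q$ in $\cZ_\q$ contain exactly the same points of $\cZ_\q$; in particular, $x \in \bar U$ if and only if, for every $n\ge 1$, there exists some $u \in U$ with $u - x \in \q^n \cZ_\q$. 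Any such $u$ then automatically satisfies $u = x + (u-x) \in \cZ_\q$, and hence $u \in U \cap \cZ_\q$.

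Next, I would translate the membership $u - x \in \q^n \cZ_\q$ into componentwise congruences via the ring isomorphism $\cZ_\q \xrightarrow{\sim} \prod_{q\in \PrimePowers} \ZZ_q$ constructed in \S\ref{ss:topology}. Under this isomorphism, $\q^n$ corresponds to the tuple $(q^n)_{q\in\PrimePowers}$, and therefore the ideal $\q^n \cZ_\q$ corresponds to $\prod_{q\in\PrimePowers} q^n \ZZ_q$: the inclusion $\subset$ is obvious, while for $\supset$ one uses that division by $q^n$ maps $q^n \ZZ_q$ bijectively onto $\ZZ_q$, so any element of $\prod_q q^n \ZZ_q$ is of the form $\q^n \cdot z$ for a well-defined $z = (z_q)_q \in \prod_q \ZZ_q$, corresponding to an element of $\cZ_\q$. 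Hence $u - x \in \q^n \cZ_\q$ is equivalent to $(u-x)_q \in q^n \ZZ_q$ for every $q \in \PrimePowers$, which is exactly the congruence $x_q \equiv u_q \pmod{q^n}$.

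Combining these two reductions yields the claim in both directions. The only point requiring any care is the identification $\q^n \cZ_\q \leftrightarrow \prod_q q^n \ZZ_q$ under the (merely continuous, not bicontinuous) ring isomorphism $\cZ_\q \cong \prod_q \ZZ_q$; this is the step where one must not confuse the $\q$-adic topology with the product topology, even though the underlying ideal structure is unaffected.
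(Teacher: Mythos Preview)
Your proof is correct and follows essentially the same approach as the paper: reduce to $U \cap \cZ_\q$ using that $\cZ_\q$ is open (and closed) in $\cQ_\q$, then interpret the $\q$-adic neighbourhood condition componentwise. The paper phrases the second step via the inverse-limit description $\overline{U\cap\cZ_\q} = \varprojlim V_n$ (citing Bourbaki), whereas you unwind it directly through the ring isomorphism $\cZ_\q \cong \prod_q \ZZ_q$ and the identification $\q^n\cZ_\q \leftrightarrow \prod_q q^n\ZZ_q$; these are equivalent formulations of the same argument.
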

\begin{proof}
  Since $\cZ_\q$ is both open and closed in $\cQ_\q$,
  we have $\bar U \cap \cZ_\q = \overline{U \cap \cZ_\q}$;
  cf.\ \cite[Ch.~I, \S 1, no.\ 6, Prop.\ 5]{Bou98a}.
  Let $V_n$ denote the image of $U\cap \cZ_\q$ in the discrete space
  $\cZ_n$.
  The claim follows since $\overline{U\cap \cZ_\q} = \varprojlim V_n$;
  see \cite[Ch.~I, \S 4, no.\ 4, Cor.]{Bou98a}.
\end{proof}

It is easy to see that $\ZZ$ is a closed and discrete subring of $\cZ_\q$
(hence of $\cQ_\q$).
Moreover, the subspace topology of $\ZZ[\q]$ in $\cZ_\q$ (and in $\cQ_\q$) is
$\q$-adic, and we may identify
$\overline{\ZZ[\q]} = \ZZ\llb \q\rrb = \varprojlim \ZZ[\q]/\q^n$ within~$\cZ_\q$.

\begin{rem}
  \begin{enumerate*}
  \item
    While we do not need this here, the same steps as above allow us to define
    $\cZo_\q$ as the $\q$-adic completion of $\cZo$, and then construct
    $\cQo_\q = \cZo[\q^{-1}]$.
    One can show that the natural map $\cQ_\q \to \cQo_\q$ (induced by the
    quotient map $\cZ \to \cZo$) is continuous, open, and surjective.
    A reader might wonder why we chose to define $\cZ$ as a particular subring
    of $\prod_\PrimePowers\ZZ$ instead of simply defining $\cZ =
    \prod_\PrimePowers\ZZ$.
    (Both choices give rise to the same $\q$-adic completion.)
    Our definition of $\cZ$ ensures that $\cZo$ embeds into $\cZo_\q$.
  \item
    The pair $(\cQ_\q,\cZ_\q)$ is an example of a (non-Noetherian)
    \itemph{affinoid ring}; see \cite[\S 1.1]{Hub96} or \cite[\S\S
    2.1--2.2]{SW20} for background.
    In particular, our construction above is a special case of a much more
    general completion procedure.
  \end{enumerate*}
\end{rem}

\section{Average sizes of kernels and the enumeration of orbits}
\label{s:ask}

In \S\S\ref{ss:mreps}--\ref{ss:classes_via_ask},
we mostly collect known material from \cite{ask,ask2,cico} pertaining to the
enumeration of linear orbits and conjugacy classes of unipotent groups.
We then derive Proposition~\ref{prop:reduction} in \S\ref{ss:proof_reduction}.
Finally, in~\S\ref{ss:inclusions}, we deduce equations and inclusions between
various submodules of $\cQx$; these relations will play a crucial role in our
proof of Theorem~\ref{thm:main_congruence}.

\subsection{Module representations}
\label{ss:mreps}

For more on the following, see \cite[\S \S 2, 4, 7]{ask2}.
Let $R$ be a ring.
A \emph{module representation} over $R$ is a homomorphism
$M\xto\theta\Hom(V,W)$, where $M$, $V$, and $W$ are $R$-modules.
Equivalently, $\theta$ is determined by the bilinear map
$*\colon V\times M \to W$ given by $x * a = x(a\theta)$ for $a\in M$
and $x\in V$.
(Recall that we usually write maps on the right.)
Given a (commutative, associative, unital) $R$-algebra $S$, we obtain a module
representation $M\otimes_R S\xto{\theta^S} \Hom(V\otimes_R S,W\otimes_R S)$
over $S$ derived from $\theta$ by base change.
For $m\ge 0$, let ${}^m\theta$ denote the module
representation $M\to \Hom(V^{\oplus m},W^{\oplus m})$ over $R$ with $a({}^m\theta) =
(a\theta)^{\oplus m}$ for $a\in M$;
we identify ${}^m({}^n\theta) = {}^{mn}\theta$.

Let $(\cdot)^* = \Hom(\cdot, R)$.
A module representation $\theta$ over $R$ as above gives rise to further module
representations over $R$, referred to as the \itemph{Knuth duals} of $\theta$ in
\cite[\S 4]{ask2}.
Among these, we will need the module representation $W^* \xto{\theta^\bullet}
\Hom(V,M^*)$ given by
\begin{align*}
  x (\psi\, \theta^\bullet) & = \Bigl(M\to R, \,\,a\mapsto(x(a\theta))\psi\Bigr).
  & (\psi \in W^*, x\in V)
\end{align*}

Two module representations $M\xto\theta\Hom(V,W)$ and $\tilde M
\xto{\tilde\theta} \Hom(\tilde V,\tilde W)$ over $R$
with bilinear maps $*$ and $\tilde *$ are \emph{isotopic} (written
$\theta\approx \tilde\theta$) if there
exists a triple of module isomorphisms $(M\xto\nu\tilde M,V\xto\phi\tilde
V,W\xto\psi\tilde W)$ such that $(x * a)\psi = (x\phi) \,\tilde*\, (a\nu)$ for all
$a\in M$ and $x\in V$.
The \emph{alternating hull} $\Lambda(\theta)$ of a module representation
$M\xto\theta\Hom(V,W)$ over $R$ is the module representation
\[
  V\oplus M \to \Hom(V\oplus M, W), \quad
  (x,a)\mapsto \Bigl((x',a')\mapsto x'(a\theta) - x(a'\theta)\Bigr).
\]

Let $M\xto\theta\Hom(V,W)$ be a module representation over $R$.
We say that $\theta$ is \emph{finite free} if each of $M$, $V$, and $W$ is
free of finite rank.
Writing $\ell$, $d$, and $e$ for these ranks,
each choice of bases of all the modules involved gives rise to an isotopy
from $R^\ell \to \Mat_{d\times e}(R) = \Hom(R^d,R^e)$ to $\theta$.
If $\theta$ is finite free, then so is $\theta^\bullet$ and, moreover,
$\theta\approx\theta^{\bullet\bullet}$.
Finally, we say that $\theta$ is \emph{immersive} if it is injective, finite
free, and if $\Coker(\theta)$ is a free $R$-module.
(This notion did not feature in the work cited above.)
In that case, after choosing bases as above
and identifying $\Mat_{d\times e}(R) = R^{de}$,
the module representation $\theta$ gives rise to a closed immersion
$\AA_R^\ell \to \AA_R^{de}$.
If $\theta$ is immersive, then so are ${}^m\theta$ and
$\Lambda(\theta^\bullet)^\bullet$;
this is clear in the former case and follows from \cite[Remark~7.10]{ask2} and
$\theta \approx \theta^{\bullet\bullet}$ in the latter case.

If no ring is specified, then all module representations
are understood to be over $\ZZ$.
Note that a module representation over $\ZZ$ (or any PID) is immersive if and
only if it is injective, finite free, and has torsion-free cokernel.

\subsection{Average sizes of kernels}
\label{ss:ask}

Let $M\xto\theta\Hom(V,W)$ be a module representation over a ring $R$.
Suppose that $M$ and $V$ are finite as sets.
As in \cite[\S 1]{ask2},
the \emph{\underline a}verage \emph{\underline s}ize of the
\emph{\underline k}ernel of the linear maps parameterised by~$\theta$ 
is $\ask (\theta) = \frac 1 {\card M} \sum_{a\in M} \card{\Ker(a\theta)}$.
Let $\theta$ be a finite free module representation over $\ZZ$.
Define $\asksymb(\theta) =
\Bigl(\ask\Bigl(\theta^{\FF_q}\Bigr)\Bigr)_{q\in\PrimePowers} \in \cQ$.
This quantity only depends on the isotopy class of $\theta$,
and the image of $\asksymb(\theta)$ in $\cQo$ only depends on the isotopy
class of $\theta^{\QQ}$ over $\QQ$.

\begin{lemma}[{Cf.\ \cite[Eqn~(5.1)]{ask2}}]
  \label{lem:ask_scheme}
  Let $M\xto\theta\Hom(V,W)$ be a finite free module representation.
  Let~$C$ be the scheme representing the functor
  $R\leadsto \{ (x,a)\in (V\otimes R) \times (M\otimes R) : x(a\theta^R) =
  0\}$.
  Then $\asksymb(\theta) = \q^{-\rank(M)}\cdot \cntsymb C$ in $\cQ$.
\end{lemma}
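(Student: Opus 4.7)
The plan is to unwind the definitions of $C$ and $\ask$ and observe that the statement reduces to the orbit-counting-style identity that fibers of the projection $C\to M$ over a point $a$ are precisely the kernels $\Ker(a\theta)$.

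More concretely, I would fix $q \in \PrimePowers$ and consider the base change $\theta^{\FF_q}\colon M\otimes\FF_q \to \Hom(V\otimes\FF_q, W\otimes\FF_q)$. By construction of $C$ (applied with $R = \FF_q$), we have
\[
  C(\FF_q) = \{(x,a) \in (V\otimes \FF_q) \times (M\otimes\FF_q) : x(a\theta^{\FF_q}) = 0\}.
\]
The projection $C(\FF_q)\to M\otimes\FF_q$ onto the second coordinate has fiber $\Ker(a\theta^{\FF_q})$ over $a \in M\otimes\FF_q$, so
\[
  \card{C(\FF_q)} \;=\; \sum_{a\in M\otimes\FF_q} \card{\Ker(a\theta^{\FF_q})}.
\]
Since $M$ is free of rank $\ell := \rank(M)$, we have $\card{M\otimes\FF_q} = q^\ell$, and dividing both sides by $q^\ell$ recognises the right-hand side as $\ask(\theta^{\FF_q})$ by the definition recalled in \S\ref{ss:ask}. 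Thus $\ask(\theta^{\FF_q}) = q^{-\ell}\card{C(\FF_q)}$ for every $q\in\PrimePowers$.

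Taking the resulting equality componentwise over all $q\in\PrimePowers$ yields the identity $\asksymb(\theta) = \q^{-\rank(M)}\cdot\cntsymb C$ in $\cQ$, once we note that both sides are bona fide elements of $\cQ$: $\cntsymb C \in \cZ$ since $C$ is a scheme of finite type over $\ZZ$, and the factor $\q^{-\rank(M)}$ is a unit in $\cQ = \cZ[\q^{-1}]$. There is no real obstacle here; the only mild subtlety is the bookkeeping that $C$ is indeed a scheme (of finite type over $\ZZ$) so that $\cntsymb C$ lives in $\cZ$ — this is immediate since $C$ is cut out in $\AA^{\rank V}\times\AA^{\rank M}$ by the $\rank(W)$ bilinear equations obtained by choosing bases and writing out $x(a\theta)=0$.
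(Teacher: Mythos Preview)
Your argument is correct: projecting $C(\FF_q)$ to $M\otimes\FF_q$ and identifying the fibres with the kernels $\Ker(a\theta^{\FF_q})$ gives exactly the sum defining $\ask(\theta^{\FF_q})$, and this is the natural computation here. The paper does not supply its own proof of this lemma; it simply records the statement with a reference to \cite[Eqn~(5.1)]{ask2}, treating the identity as already established there, so your direct verification is precisely the argument underlying that citation.
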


When investigating $\asksymb(\theta)$ in $\cQo$, it suffices to consider
immersive module representations.

\begin{lemma}
  \label{lem:assume_immersive}
  Let $M\xto{\theta}\Hom(V,W)$ be a finite free module representation over $\ZZ$.
  Then there exists an immersive module representation $\tilde
  M\xto{\tilde\theta}\Hom(V,W)$ over $\ZZ$ such that
  $\asksymb({}^m\theta) = \asksymb({}^m\tilde\theta)$ in~$\cQo$ for all $m\ge 0$.
\end{lemma}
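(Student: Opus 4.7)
The plan is to produce $\tilde\theta$ in two conceptual steps: first kill the kernel of $\theta$, then saturate its image inside $\Hom(V,W)$. Together these operations enforce all three conditions for immersivity (injectivity, free source, free cokernel), while preserving $\asksymb({}^m\cdot)$ in $\cQo$.

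For the first step, set $K = \Ker(\theta)$. The image $\theta(M) \cong M/K$ sits inside the finite free $\ZZ$-module $\Hom(V,W)$ and is therefore torsion-free, so $M/K$ is free and $K$ is a direct summand of $M$. I would then choose a splitting $M = K \oplus M_1$ and restrict $\theta$ to an injection $\iota \colon M_1 \hookrightarrow \Hom(V,W)$. For any ring $R$, the $K$-summand acts trivially on $V \otimes R$, so the averaging definition of $\ask$ yields $\ask({}^m\theta^R) = \ask({}^m\iota^R)$ on the nose; in particular $\asksymb({}^m\theta) = \asksymb({}^m\iota)$ already in $\cQ$.

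For the second step, let $\tilde M$ be the saturation of $M_1$ in $\Hom(V,W)$ (elements $x$ with $nx \in M_1$ for some $n \ge 1$), and let $\tilde\theta \colon \tilde M \hookrightarrow \Hom(V,W)$ be the inclusion. Then $\tilde M$ is free, $\tilde\theta$ is injective, and the cokernel $\Hom(V,W)/\tilde M$ is torsion-free (hence free), so $\tilde\theta$ is immersive. Choose $n \ge 1$ annihilating the finite quotient $\tilde M / M_1$. For each prime power $q = p^e$ with $p \nmid n$, tensoring $0 \to M_1 \to \tilde M \to \tilde M/M_1 \to 0$ with $\ZZ/q\ZZ$ kills both the quotient and the $\mathrm{Tor}$ term, giving $M_1 \otimes \FF_q = \tilde M \otimes \FF_q$ as submodules of $\Hom(V,W) \otimes \FF_q$. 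Under this identification ${}^m\iota^{\FF_q}$ and ${}^m\tilde\theta^{\FF_q}$ are literally the same module representation, so their ask values coincide. Since this covers all but finitely many primes, the equality descends to $\asksymb({}^m\iota) = \asksymb({}^m\tilde\theta)$ in $\cQo$.

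The only genuine obstacle is this last saturation step: one has to work modulo $\cI$ to identify the sources of $\iota$ and $\tilde\theta$ in characteristics coprime to $n$, which explains why the conclusion is phrased in $\cQo$ rather than $\cQ$. All other pieces (splitting off the kernel, freeness of the saturation and its cokernel, the compatibility of each reduction with the operator ${}^m(\cdot)$ since it acts only on the target) are standard structural facts about finitely generated $\ZZ$-modules.
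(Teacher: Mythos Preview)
Your proof is correct and follows essentially the same two-step strategy as the paper: first pass to the quotient by the kernel (the paper uses the induced map on $M/\Ker(\theta)$ rather than an explicit splitting, but these are equivalent since $M/\Ker(\theta)$ is free), then saturate the image and observe that for characteristics coprime to $\lvert\tilde M/M_1\rvert$ the base-changed module representations coincide. Your explicit mention of the vanishing $\mathrm{Tor}$ term is a slightly more detailed justification of what the paper records as ``the natural map $M\otimes\FF_q\to\tilde M\otimes\FF_q$ is then onto''.
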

\begin{proof}
  Let $M/\Ker(\theta) \xto{\bar\theta} \Hom(V,W)$ be induced by $\theta$.
  As $M/\Ker(\theta)$ is torsion-free, $\bar\theta$ is finite free.
  Clearly, $\asksymb({}^m\theta) = \asksymb({}^m\bar\theta)$ (in $\cQ$) for
  all $m\ge 0$ so we may assume
  that $\theta$ is the inclusion of a submodule $M$ into $\Hom(V,W)$.
  Let $\tilde M/M$ be the torsion submodule of $\Hom(V,W)/M$.
  Let $\tilde\theta$ be the inclusion of $\tilde M$ into $\Hom(V,W)$;
  note that $\tilde\theta$ is immersive.
  Then $\tilde M/M$ has finite cardinality, $N$ say, and
  $\tilde M/M \otimes \FF_q = 0$ for $q\in \PrimePowers$ with
  $\gcd(q,N) = 1$.
  For such $q$, the natural map $M\otimes \FF_q \to \tilde M\otimes \FF_q$ is
  then onto and the claim follows.
\end{proof}

\subsection{Counting linear orbits of unipotent groups}
\label{ss:orbits_via_ask}

\paragraph{From module representations to abelian groups.}
Let $M\xto\theta\Mat_{d\times e}(\ZZ)$ be a finite free
module representation.
Define ${\mathsf M}_\theta$ to be the group scheme representing $R\leadsto
M\otimes R$ (additive group).
Let~$*$ be the bilinear map associated with $\theta$ as in \S\ref{ss:mreps}.
The action of $M$ on $\ZZ^{d+e} = \ZZ^d \oplus \ZZ^e$ given by $(x,y) a =
(x,x*a + y)$ naturally extends to an action of ${\mathsf M}_\theta$ on $\AA^{d+e}$.
The second proof of \cite[Lem.~2.1]{ask} shows that
\begin{equation}
  \cntsymb (\AA^{d+e} / \mathsf M_\theta) = \q^e \cdot \asksymb(\theta)
  \label{eq:ask_as_orbits}
\end{equation}
in $\cQ$.
(We can argue as in the proof of Lemma~\ref{lem:assume_immersive} to reduce to
the case that $\theta$ is injective as in the setting of
\cite{ask}.)
Since every finite free module representation is isotopic to one of the form
$M\to\Mat_{d\times e}(\ZZ)$, up to a factor of the form $\q^e$, the quantities
$\asksymb(\theta)$ attached to finite free module representations $\theta$
enumerate orbits associated with linear actions of group schemes.
If $\theta$ is immersive, then we may view ${\mathsf M}_\theta$ as a
closed (and commutative) subgroup scheme of~$\Uni_{d+e}$.
Using Lemma~\ref{lem:assume_immersive}, we conclude the following.

\begin{cor}
  \label{cor:ask_as_orbits}
  Let $\theta$ be a finite free module representation.
  \begin{enumerate*}
  \item \label{cor:ask_as_orbits1}
    If $\theta$ is immersive,  then there exist $m,n\ge 0$ and a closed 
    commutative subgroup scheme $G\le\Uni_n$ with $\orbsymb(\AA^n/G) = \q^m\cdot
    \asksymb(\theta)$ in $\cQ$.
  \item \label{cor:ask_as_orbits2}
    In any case, there exist $m,n\ge 0$ and a commutative algebraic subgroup
    $G_0\le\Uni_n\otimes \QQ$ with $\orbsymb(\AA^n_\QQ/G_0) = \q^m\cdot
    \asksymb(\theta)$ in $\cQo$.
    \qed
    \end{enumerate*}
\end{cor}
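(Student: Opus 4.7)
The plan is to derive both statements directly from the identity \eqref{eq:ask_as_orbits} together with the reduction supplied by Lemma~\ref{lem:assume_immersive}. For part~\ref{cor:ask_as_orbits1}, I would first note that both $\asksymb$ and the orbit count $\orbsymb(\AA^{d+e}/\mathsf M_\theta)$ depend only on the isotopy class of $\theta$ (an isotopy acts on $\AA^{d+e}$ by a linear change of variables), so after choosing bases we may assume that $\theta$ has the form $M \to \Mat_{d\times e}(\ZZ)$. Writing $*$ for the associated bilinear map, the action $(x,y)\cdot a = (x,\, x*a + y)$ is realised by the embedding $a\mapsto \bigl(\begin{smallmatrix} I_d & a\theta \\ 0 & I_e \end{smallmatrix}\bigr)$ of $\mathsf M_\theta$ into $\Uni_{d+e}$; because $\theta$ is immersive---that is, injective with free cokernel---this embedding is a closed immersion of group schemes. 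Taking $G = \mathsf M_\theta \le \Uni_{d+e}$, $n = d+e$, and $m = e$, equation~\eqref{eq:ask_as_orbits} becomes exactly the asserted identity $\orbsymb(\AA^n/G) = \q^m\cdot\asksymb(\theta)$ in~$\cQ$.

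For part~\ref{cor:ask_as_orbits2}, I would apply Lemma~\ref{lem:assume_immersive} to the given finite free module representation $\theta$ to produce an immersive $\tilde\theta$ with $\asksymb(\theta) = \asksymb(\tilde\theta)$ in~$\cQo$. Applying part~\ref{cor:ask_as_orbits1} to $\tilde\theta$ yields a closed commutative subgroup scheme $G \le \Uni_n$ and integers $m,n \ge 0$ with $\orbsymb(\AA^n/G) = \q^m\cdot\asksymb(\tilde\theta)$ in $\cQ$; projecting this identity to $\cQo$ and setting $G_0 = G \otimes \QQ$ completes the argument.

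The real content of the claim is carried by equation~\eqref{eq:ask_as_orbits}, which is already attributed to \cite{ask}, and by Lemma~\ref{lem:assume_immersive}; the remaining task is essentially bookkeeping. The only point that warrants even a moment's attention is the verification that $\mathsf M_\theta\hookrightarrow \Uni_{d+e}$ is a closed immersion, which is exactly what the freeness of $\Coker(\theta)$ buys (surjectivity of the corresponding map on coordinate rings). I therefore anticipate no substantive obstacle.
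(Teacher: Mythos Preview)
Your proposal is correct and matches the paper's own argument: both parts come straight from equation~\eqref{eq:ask_as_orbits} with $G=\mathsf M_\theta\le\Uni_{d+e}$, $n=d+e$, $m=e$ in the immersive case, and the general case is reduced to this via Lemma~\ref{lem:assume_immersive}. Your added remarks on why immersivity makes $\mathsf M_\theta\hookrightarrow\Uni_{d+e}$ a closed immersion simply spell out what the paper asserts in one sentence.
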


Hence, up to a factor of the form $\q^e$, the quantities $\ask(\theta)$
enumerate linear orbits.
In the unipotent case, we obtain the following converse.

\paragraph{From unipotent groups to module representations.}
Let $G_0 \le \Uni_n \otimes \QQ$ be a unipotent algebraic group over
$\QQ$.
Let $\fn_n(R)$ denote the Lie algebra of strictly upper triangular
$n\times n$ matrices over a ring $R$.
We may regard the Lie algebra $\fg_0$ of $G_0$ as a subalgebra of
$\fn_n(\QQ)$.
Let $\fg = \fg_0 \cap \fn_n(\ZZ)$, and
let $\iota$ denote the inclusion $\fg \incl \Mat_n(\ZZ)$.
This is an immersive module representation.

\begin{prop}[{\cite[Proof of Propn~8.13]{ask}}]
  \label{prop:orbits_via_ask}
  $\orbsymb(\AA^n_{\QQ}/G_0) = \asksymb(\iota)$ in $\cQo$.
\end{prop}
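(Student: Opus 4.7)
My plan is to combine the orbit-counting lemma with the truncated exponential/logarithm correspondence between $G(\FF_q)$ and $\fg(\FF_q)$, valid in all but finitely many characteristics. Since the claim is an identity in $\cQo$, we are entitled to discard any fixed finite set of primes.

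First I would spread out: choose an integer $N\ge 1$ (a multiple of the denominators of structure constants and of $c!$, where $c$ is the nilpotency class of $\fg_0$) such that the scheme-theoretic closure $G$ of $G_0$ in $\Uni_n$ becomes a smooth closed subgroup scheme of $\Uni_n\otimes \ZZ[1/N]$ whose Lie algebra over $\ZZ[1/N]$ coincides with $\fg\otimes \ZZ[1/N]$. Throughout, I restrict attention to prime powers $q\in\PrimePowers$ with $\gcd(q,N)=1$; this is enough for an equality in $\cQo$.

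Next I would apply the orbit-counting lemma to the action of $G(\FF_q)$ on $\FF_q^n$:
\[
  \cnt(\FF_q^n/G(\FF_q)) \;=\; \frac{1}{\card{G(\FF_q)}} \sum_{g\in G(\FF_q)} \card{\Ker(g-1)},
\]
where $g-1 \in \fn_n(\FF_q)$ acts on row vectors by right multiplication. For $q$ coprime to $N$, the truncated exponential $\exp(X) = \sum_{k=0}^{c} X^k/k!$ is a well-defined bijection $\fg(\FF_q)\xto{\sim} G(\FF_q)$ with $\card{\fg(\FF_q)} = \card{G(\FF_q)} = q^{\dim G_0}$. Writing $g = \exp(X)$, one checks that $g-1 = X\cdot h(X)$ where $h(X) = 1 + X/2 + X^2/6 + \dotsb$ is upper unitriangular and hence invertible in $\Mat_n(\FF_q)$; therefore $\Ker(g-1) = \Ker(X)$ as subspaces of $\FF_q^n$. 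Substituting and reindexing the sum over $\fg(\FF_q)$ via $\exp$ yields
\[
  \cnt(\FF_q^n/G(\FF_q)) \;=\; \frac{1}{\card{\fg(\FF_q)}} \sum_{X\in \fg(\FF_q)} \card{\Ker(X)} \;=\; \ask\!\bigl(\iota^{\FF_q}\bigr),
\]
which is precisely the $q$-th component of $\asksymb(\iota)$. Passing to $\cQo$ gives the claim.

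The main obstacle is the base-change compatibility in the spreading-out step: one must verify that the $\ZZ$-form $\fg = \fg_0 \cap \fn_n(\ZZ)$ of the Lie algebra, defined by taking integral points inside $\fn_n$, actually coincides after inverting $N$ with the Lie algebra of the chosen integral form $G$ of $G_0$, so that the bijection $\exp$ makes sense between $\fg(\FF_q)$ and $G(\FF_q)$ for $\gcd(q,N)=1$. Granting that standard (and harmless) point, the identity $\Ker(g-1)=\Ker(X)$ together with the orbit-counting lemma delivers the result; this is the content of the cited \cite[Proof of Propn~8.13]{ask}.
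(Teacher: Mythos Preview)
Your argument is correct and follows the line of the cited proof in \cite[Proof of Propn~8.13]{ask}: the orbit-counting lemma, the exponential bijection $\fg\otimes\FF_q\to G(\FF_q)$ in large characteristic, and the factorisation $\exp(X)-1=X\,h(X)$ with $h(X)$ upper unitriangular, forcing $\Ker(\exp(X)-1)=\Ker(X)$. One small correction: the primes that must be inverted for the matrix exponential are governed by the nilpotency index of elements of $\fg\subset\fn_n(\ZZ)$ \emph{as matrices} (hence by $(n-1)!$), not by the Lie-theoretic class $c$ of $\fg_0$---an abelian $\fg_0$ can still contain a matrix $X$ with $X^{n-1}\ne 0$---but since you only need \emph{some} finite $N$, this does not affect the conclusion in $\cQo$.
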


\subsection{Counting conjugacy classes of unipotent groups}
\label{ss:classes_via_ask}

\paragraph{From module  representations to groups I: Heisenberg-like groups.}
Let $\theta\colon M\to \Hom(V,W)$ be a module representation
with corresponding bilinear map $*\colon V\times M \to W$.
We may endow the set $M\oplus V \oplus W$ with a group structure by
formal matrix multiplication within the set $\left[\begin{smallmatrix} 1 & V &
    W \\ 0 & 1 & M \\ 0 & 0 & 1\end{smallmatrix}\right]$;
the required product $V\times M \to W$ is taken to be $*$.
Equivalently, this group is obtained as the semidirect product of $M$ acting on
$V\oplus W$, where $M$ acts via $(v,w)a = (v,v * a + w)$ ($a\in
M$, $v\in V$, $w\in W$) (cf.\ \S\ref{ss:orbits_via_ask}).
This construction of a group from $\theta$ (via $*$)
is classical; for context and variations, see \cite[\S 9.1]{Wil17}.

By base change, the above construction of a group from
$\theta$ gives rise to a group functor~$\sH_\theta$.
If $\theta$ is finite free, then $\sH_\theta$ is a group scheme and
$\sH_\theta\otimes \QQ$ is a unipotent algebraic group of class at most $2$.
If $*\colon \ZZ\times \ZZ \to \ZZ$ is the usual multiplication,
then we recover the Heisenberg group scheme~$\Uni_3$.

\begin{prop}
  \label{prop:k_H}
  Let $M\xto\theta \Hom(V,W)$ be a finite free module representation.
  Then $\consymb(\sH_\theta) = \q^{\rank(M) - \rank(V) + \rank(W) }
  \cdot \asksymb({}^2(\theta^\bullet))$ in~$\cQ$.
\end{prop}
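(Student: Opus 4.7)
The plan is to express $\concnt(\sH_\theta(\FF_q))$ via Burnside's lemma and then to carry out a Fourier-theoretic calculation that matches the resulting sum to $\ask({}^2(\theta^\bullet))$. The structure of $\sH_\theta$ makes this feasible: the explicit multiplication rule derived from the upper unitriangular matrix model shows that $W$ sits in the centre and that the commutator of two elements $g_i = (a_i,v_i,w_i)$ is $(0,0,v_1 * a_2 - v_2 * a_1) \in W$. Consequently, $g_1$ and $g_2$ commute if and only if $v_1 * a_2 = v_2 * a_1$, and the $w$-coordinates impose no constraint. Applying the orbit-counting lemma to the conjugation action gives
\[
  \concnt(\sH_\theta(\FF_q)) = \frac{|W|\cdot N}{|M|\cdot |V|},
  \qquad
  N := \#\bigl\{(a_1,v_1,a_2,v_2) : v_1 * a_2 = v_2 * a_1\bigr\},
\]
so the whole task reduces to evaluating $N$ over $\FF_q$.

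Next, I would fix a nontrivial additive character $\chi\colon \FF_q\to \CC^\times$ and detect the equality $v_1 * a_2 - v_2 * a_1 = 0$ by summing over characters of $W$. Writing a character of $W = W\otimes \FF_q$ as $w\mapsto \chi(\psi(w))$ for $\psi\in W^*\otimes \FF_q$ and separating the two halves of the condition yields
\[
  N = \frac{1}{|W|} \sum_{\psi} \Bigl| \sum_{v,a} \chi\bigl(\psi(v(a\theta))\bigr)\Bigr|^2.
\]
For fixed $\psi$, the inner double sum is the character sum of the bilinear form $B_\psi(v,a) = (v(a\theta))\psi$ on $V\times M$. Summing first over $v$, the standard vanishing of nontrivial linear characters forces the contribution to be $|V| \cdot \#K_\psi$, where $K_\psi \subset M$ is the right kernel of $B_\psi$. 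The rank-nullity duality for bilinear forms over $\FF_q$ identifies $|M|/|K_\psi| = |V|/|\Ker(\psi\theta^\bullet)|$, since by definition of $\theta^\bullet$ the left kernel of $B_\psi$ is precisely $\Ker(\psi\theta^\bullet)$. Hence the inner sum equals $|M|\cdot |\Ker(\psi\theta^\bullet)|$, and squaring gives
\[
  N = \frac{|M|^2}{|W|}\sum_{\psi\in W^*\otimes \FF_q} \abs{\Ker(\psi\theta^\bullet)}^2 = |M|^2 \cdot \ask\bigl({}^2(\theta^{\bullet,\FF_q})\bigr),
\]
where the last equality uses $|\Ker({}^2\alpha)| = |\Ker(\alpha)|^2$ for any linear map $\alpha$.

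Substituting this expression for $N$ back into the Burnside formula and simplifying cardinalities yields
\[
  \concnt(\sH_\theta(\FF_q)) = \frac{|W|\cdot |M|}{|V|}\cdot \ask\bigl({}^2(\theta^{\bullet,\FF_q})\bigr) = q^{\rank(M) - \rank(V) + \rank(W)}\cdot \ask\bigl({}^2(\theta^{\bullet,\FF_q})\bigr),
\]
which is the required identity in $\cQ$ once we range over $q \in \PrimePowers$. The main obstacle is the bilinear form duality step: one must carefully verify that the two sides of the rank-nullity identity correspond under the definition of $\theta^\bullet$ as given in \S\ref{ss:mreps}, and keep track of the cancellations between $|V|$, $|M|$, and $|W|$ that produce the correct exponent of $\q$. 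Everything else is a routine unwinding of the group law of $\sH_\theta$ and a standard character computation.
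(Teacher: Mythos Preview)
Your argument is correct. The commutator computation, the Burnside count, the character detection of the constraint $v_1*a_2 = v_2*a_1$, and the bilinear-form rank equality $|V|/|\Ker(\psi\theta^\bullet)| = |M|/|K_\psi|$ all check out, and the final bookkeeping of the exponent $\rank(M)-\rank(V)+\rank(W)$ is right.

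Your route differs from the paper's. The paper does not compute anything directly here; it simply invokes three results from \cite{ask2}. Unpacked, that argument factors through the alternating hull: one first identifies the commutator map of $\sH_\theta$ with $\Lambda(\theta)$ and obtains $\consymb(\sH_\theta) = \q^{\rank(W)}\cdot\asksymb(\Lambda(\theta))$, and then applies the Knuth-duality identity $\asksymb(\Lambda(\theta)) = \q^{\rank(M)-\rank(V)}\cdot\asksymb({}^2(\theta^\bullet))$ (the same identity used later in the proof of Proposition~\ref{prop:eq_incl}\ref{cor:eq_incl5}). Your proof bypasses the $\Lambda$-step entirely and goes straight from the commuting-pair count to ${}^2(\theta^\bullet)$ via characters. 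The trade-off is that the paper's version is modular---each cited lemma is reusable elsewhere in the theory---while yours is self-contained and makes the appearance of the squared kernel (hence the ${}^2$) transparent: it comes from the factorisation of the commuting-pair condition into two independent copies of the same bilinear constraint.
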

\begin{proof}
  Combine \cite[Lem.\ 7.7]{ask2}, \cite[Thm\ 7.9]{ask2}, and
  \cite[Lem.~3.2]{ask2}.
\end{proof}

We note that ${}^2(\theta^\bullet)$ and $({}^2\theta)^\bullet$
are generally not isotopic; cf.\ \cite[Propn 4.14(i)]{ask2}.

\paragraph{From module  representations to groups II: Baer group schemes.}
Given a module representation $M\xto{\theta} \Hom(M,W)$
with corresponding bilinear map $*\colon M\times M \to W$,
we say that $\theta$ is \emph{alternating} if $a * a = 0$ for all $a \in M$.
The alternating hull (see \S\ref{ss:mreps}) of any module representation is
alternating.
Suppose that $\theta$ is alternating and finite free.
As explained in \cite[\S 2.4]{cico} (which draws upon \cite[\S 2.4.1]{SV14}),
we then obtain an associated \itemph{Baer group scheme} $\mathsf G_\theta$.
For each ring $R$, we may identify $\mathsf G_\theta(R) = (M\otimes R) \oplus
(W\otimes R)$ as sets.
With this identification, group commutators in $\mathsf G_\theta(R)$ are
characterised by the identity $[(a,y),(a',y')] = (0, a *_R a')$
for $a,a'\in M\otimes R$ and $y,y'\in W\otimes R$; here $*_R$ denotes the
bilinear map $(M\otimes R)\times (M\otimes R) \to W\otimes R$ induced by $*$
via base change.

\begin{prop}[{Cf.\ \cite[Prop.\ 2.6]{cico}}]
  \label{prop:alt_ask_via_k}
  Let $M\xto\theta\Hom(M,W)$ be an alternating finite free module
  representation.
  Then $\consymb(\mathsf G_\theta) = \q^{\rank(W)}\cdot \asksymb(\theta)$
  in $\cQ$.
\end{prop}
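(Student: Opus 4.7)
The plan is to prove the claim termwise: I will show that for every prime power $q$, the class number $\concnt(\mathsf G_\theta(\FF_q))$ equals $q^{\rank(W)} \cdot \ask(\theta^{\FF_q})$, and then conclude by the definitions of $\consymb$ and $\asksymb$. Since $\mathsf G_\theta$ has underlying scheme $\AA^{\rank(M) + \rank(W)}$ and the coordinates of $M$ and $W$ are preserved by base change, it suffices to verify this identity for a single finite commutative ring of coefficients, which I will take to be $\FF_q$. Set $G = \mathsf G_\theta(\FF_q)$, $\bar M = M\otimes \FF_q$, $\bar W = W\otimes \FF_q$, and let $\bar\theta = \theta^{\FF_q}$.

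First I would unpack the group structure on $G = \bar M \oplus \bar W$. The commutator identity $[(a,y),(a',y')] = (0, a *_{\FF_q} a')$ together with the fact that $*_{\FF_q}$ takes values in $\bar W$ shows that $\bar W \subset Z(G)$ and that $G$ is nilpotent of class at most $2$. Antisymmetry of $*_{\FF_q}$ (which follows from the alternating hypothesis via the usual polarisation $a*a' + a'*a = (a+a')*(a+a') - a*a - a'*a' = 0$) then allows me to compute, for each $g = (a,y) \in G$ and $h = (a', y') \in G$, the conjugate $g^h = g [g,h] = (a, y + a*_{\FF_q} a')$.

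Next I would use the elementary identity $g^h = g \cdot [g,h]$ in a class-$2$ group to identify the centraliser and the conjugacy class of $g = (a, y)$. The centraliser $C_G(g)$ consists of those $(a',y') \in G$ with $a *_{\FF_q} a' = 0$; that is, $C_G(g) = \Ker(a\bar\theta) \oplus \bar W$, which has size $\card{\Ker(a\bar\theta)} \cdot q^{\rank(W)}$, independent of $y$. Applying the class equation
\[
\concnt(G) = \sum_{g \in G} \frac{\card{C_G(g)}}{\card{G}},
\]
summing first over $y \in \bar W$ (which contributes a factor of $q^{\rank(W)}$ and cancels $\bar W$ in both numerator and denominator), gives
\[
\concnt(G) = q^{\rank(W)} \cdot \frac{1}{\card{\bar M}} \sum_{a \in \bar M} \card{\Ker(a\bar\theta)} = q^{\rank(W)} \cdot \ask(\bar\theta),
\]
which is the desired identity.

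I do not anticipate a real obstacle here: the argument is a direct orbit-counting computation that exploits the class-$2$ structure of $\mathsf G_\theta$ and the fact that the commutator map agrees, up to sign, with the bilinear map $*$ defining $\theta$. The one place to be careful is the derivation of antisymmetry from the alternating hypothesis, which is why I would record the polarisation step explicitly; everything else is bookkeeping with ranks and centraliser sizes.
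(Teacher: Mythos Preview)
Your argument is correct. The paper itself does not supply a proof of this proposition but defers to \cite[Prop.~2.6]{cico}; your computation is essentially the standard one underlying that reference. The only point worth a small comment is your identification $\{a' : a *_{\FF_q} a' = 0\} = \Ker(a\bar\theta)$: in the paper's conventions the bilinear map is $a * a' = a(a'\bar\theta)$, so literally $a * a' = 0$ says $a \in \Ker(a'\bar\theta)$; it is precisely the antisymmetry you derived from the alternating hypothesis (which does survive base change to $\FF_q$, including in characteristic~$2$, since $M$ is free) that lets you swap the roles of $a$ and $a'$ and obtain $a' \in \Ker(a\bar\theta)$. With that in hand, the centraliser count and the orbit-counting identity go through exactly as you wrote.
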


\paragraph{From unipotent groups to alternating module  representations.}
Let $G_0$ be a unipotent algebraic group over~$\QQ$.
Let $\fg$ be a $\ZZ$-form of the Lie algebra of $G_0$ such that
$\fg$ is free of finite rank as a $\ZZ$-module.
Let $\ad_{\fg}\colon \fg \to \End(\fg), a \mapsto (x\mapsto [x,a])$ be the
(right) adjoint representation of $\fg$;
note that this is an alternating finite free module representation.
\begin{prop}[{Cf.\ \cite[Propn~6.4]{ask2}}]
  \label{prop:k_via_ask_ad}
  $\consymb(G_0) = \asksymb(\ad_{\fg})$ in $\cQo$.
\end{prop}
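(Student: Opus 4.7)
The plan is to apply the Lazard--Kirillov correspondence to translate the class count of $G(\FF_q)$ into an average over centraliser sizes in $\fg(\FF_q)$, and then to recognise this as $\ask(\ad_{\fg}^{\FF_q})$. Since the asserted identity lies in $\cQo$, it suffices to establish it for all prime powers $q$ outside a finite set of exceptional residue characteristics.

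Let $c$ be the nilpotency class of $\fg_0$. A standard spreading-out argument yields an integer $N\ge 1$ such that, over $\ZZ[1/N]$, the truncated Baker--Campbell--Hausdorff and logarithm series define mutually inverse scheme morphisms $\exp\colon\fg\otimes\ZZ[1/N] \leftrightarrow G\otimes\ZZ[1/N]\colon\log$, $\mathrm{Ad}$/$\ad$-equivariantly in the sense that $\mathrm{Ad}(\exp(y))(a) = \exp(\ad_y)(a)$, where $\ad_y$ is the nilpotent operator on $\fg$ associated with $y$ through $\ad_{\fg}$ (signs being immaterial for what follows). Specialising to any prime power $q$ with $\gcd(q,N)=1$ yields a bijection $\exp\colon\fg(\FF_q)\to G(\FF_q)$ with the analogous properties.

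Fix such a $q$, write $h = \exp(a)$ for $a\in\fg(\FF_q)$, and let $y\in\fg(\FF_q)$. Membership $\exp(y)\in C_{G(\FF_q)}(h)$ then translates to $\exp(\ad_y)(a) = a$. Since $\fg$ has class at most $c$, the operator $\ad_y$ on $\fg(\FF_q)$ is nilpotent; the factorisation $\exp(\ad_y) - \mathrm{id} = \ad_y\cdot u(y)$ with $u(y) = 1 + \tfrac12\ad_y + \cdots$ invertible reduces the centraliser condition to $[a,y] = 0$, equivalently $y\in\Ker(a\ad_{\fg}^{\FF_q})$ (in the paper's convention $(a\ad_{\fg})(y) = [y,a]$). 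Hence $\card{C_{G(\FF_q)}(h)} = \card{\Ker(a\ad_{\fg}^{\FF_q})}$, and the orbit-counting lemma applied to $G(\FF_q)$ acting on itself by conjugation gives
\[
\concnt(G(\FF_q)) \;=\; \frac{1}{\card{G(\FF_q)}}\sum_{h\in G(\FF_q)}\card{C_{G(\FF_q)}(h)} \;=\; \frac{1}{\card{\fg(\FF_q)}}\sum_{a\in\fg(\FF_q)}\card{\Ker(a\ad_{\fg}^{\FF_q})} \;=\; \ask(\ad_{\fg}^{\FF_q}).
\]
Since this equality holds for every $q$ coprime to $N$, the identity $\consymb(G_0) = \asksymb(\ad_{\fg})$ follows in $\cQo$.

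The main obstacle is the first step, namely producing the integral Lazard data over $\ZZ[1/N]$ that matches the chosen integral models $G\le\Uni_n$ (the scheme-theoretic closure of $G_0$) and $\fg = \fg_0\cap\fn_n(\ZZ)$, together with the $\mathrm{Ad}$/$\ad$ equivariance. The characteristic-zero Lazard correspondence is classical; its extension to $\ZZ[1/N]$ by spreading out is routine but requires some care in matching the matrix exponential on $\fn_n$ with the chosen Lie-algebra lattice and with the formal group exponential on the closure of $G_0$ in $\Uni_n$.
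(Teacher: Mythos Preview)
Your argument is correct and is precisely the Lazard--Kirillov computation underlying the references the paper cites (\cite[Propn~6.4]{ask2} and \cite[Thm~B]{O'BV15}); the paper itself gives no independent proof beyond those citations. The only cosmetic mismatch is that the proposition's setup allows $\fg$ to be \itemph{any} free $\ZZ$-form of the Lie algebra of $G_0$, not necessarily $\fg_0\cap\fn_n(\ZZ)$, but since any two such forms become isomorphic over $\ZZ[1/N]$ for suitable $N$, this does not affect the identity in $\cQo$.
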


One may view the preceding proposition as a consequence of
\cite[Thm~B]{O'BV15}; see \cite{ask2}.

\subsection{Reductions to low nilpotency class: proof of Proposition~\ref{prop:reduction}}
\label{ss:proof_reduction}

As we will now explain, when $G_0 \le \Uni_n\otimes \QQ$ is a unipotent
algebraic group, then the study of the $\cQo$-valued invariants
$\cntsymb (\AA^n_\QQ/G_0)$ and $\consymb(G_0)$ from \S\ref{s:arithmetic_1}
reduces to the case that $G_0$ has nilpotency class at most $2$.
(In the case of $\cntsymb (\AA^n_\QQ/G_0)$, we can even reduce to the
commutative case.)
While these reductions will come as no surprise to experts in the area, the
author feels that they deserve to be recorded in the following form.

\begin{lemma}[{= Proposition~\ref{prop:reduction}\ref{prop:reduction1}}]
  \label{lem:red1}
  Let $G_0 \le \Uni_n \otimes \QQ$ be a unipotent algebraic group over
  $\QQ$.
  Then there exists a commutative algebraic group $H_0 \le \Uni_{2n}\otimes
  \QQ$ of the same dimension as $G_0$ such that
  $\orbsymb(\AA^n_\QQ / G_0) = \q^{-n}\cdot \orbsymb(\AA^{2n}_\QQ/H_0)$ in $\cQo$.
\end{lemma}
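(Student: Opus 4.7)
The plan is to combine Proposition~\ref{prop:orbits_via_ask}, which reformulates $\orbsymb(\AA^n_\QQ/G_0)$ as an ask invariant, with the construction from \S\ref{ss:orbits_via_ask} that turns immersive module representations into commutative unipotent group schemes. The first step reduces the (potentially highly non-commutative) orbit-counting problem for $G_0$ to a purely linear question about its Lie algebra; the second step then realises that linear question as the orbit count of a commutative subgroup scheme of $\Uni_{2n}$ acting on $\AA^{2n}$.

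Concretely, I would let $\fg_0 \subset \fn_n(\QQ)$ be the Lie algebra of $G_0$ and set $\fg := \fg_0 \cap \fn_n(\ZZ)$. A short lattice-saturation check shows that $\fg$ has $\ZZ$-rank $d := \dim G_0$ and that $\fn_n(\ZZ)/\fg$ is torsion-free: indeed, if $a \in \fn_n(\ZZ)$ satisfies $ka \in \fg$ for some nonzero $k\in\ZZ$, then $a$ lies in the $\QQ$-subspace $\fg_0$, hence in $\fg_0 \cap \fn_n(\ZZ) = \fg$. Consequently the inclusion $\iota\colon \fg \incl \Mat_n(\ZZ)$ is an immersive module representation, and Proposition~\ref{prop:orbits_via_ask} gives $\orbsymb(\AA^n_\QQ/G_0) = \asksymb(\iota)$ in $\cQo$.

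Next I would feed $\iota$, viewed as a module representation into $\Mat_{n \times n}(\ZZ)$ (so both $d$ and $e$ in the notation of \S\ref{ss:orbits_via_ask} equal $n$), into the construction of that subsection. This produces the commutative group scheme $\mathsf M_\iota$ representing $R \leadsto \fg \otimes R$, acting on $\AA^{2n}$ via the block-unitriangular matrices $\bigl(\begin{smallmatrix} I_n & A \\ 0 & I_n \end{smallmatrix}\bigr)$ with $A\in \fg\otimes R$; since $\iota$ is immersive, $\mathsf M_\iota$ is in fact a closed subgroup scheme of $\Uni_{2n}$. Equation~\eqref{eq:ask_as_orbits} then specialises to $\cntsymb(\AA^{2n}/\mathsf M_\iota) = \q^n \cdot \asksymb(\iota)$, so setting $H_0 := \mathsf M_\iota \otimes \QQ$ yields
\[ \orbsymb(\AA^n_\QQ/G_0) = \q^{-n} \cdot \orbsymb(\AA^{2n}_\QQ/H_0) \]
in $\cQo$, while $\dim H_0 = \rank_\ZZ \fg = d = \dim G_0$, as required.

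The only genuine content beyond bookkeeping is the saturation observation making $\iota$ immersive, which licenses viewing $\mathsf M_\iota$ as a subgroup scheme of $\Uni_{2n}$ rather than merely of $\GL_{2n}$; without it one could still invoke Lemma~\ref{lem:assume_immersive} to replace $\iota$ by an immersive surrogate, at the cost of an ugly detour. In the present setting the saturation check is direct, so the lemma falls out essentially as a corollary of Proposition~\ref{prop:orbits_via_ask} and equation~\eqref{eq:ask_as_orbits}.
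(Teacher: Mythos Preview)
Your proof is correct and follows precisely the same route as the paper: invoke Proposition~\ref{prop:orbits_via_ask} to pass to $\asksymb(\iota)$, then apply the construction of $\mathsf M_\iota \le \Uni_{2n}$ from \S\ref{ss:orbits_via_ask} together with equation~\eqref{eq:ask_as_orbits}, taking $H_0 = \mathsf M_\iota \otimes \QQ$. The only difference is that you spell out the saturation argument for immersiveness of $\iota$ where the paper simply recalls it from the paragraph preceding Proposition~\ref{prop:orbits_via_ask}.
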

\begin{proof}
  Given $G_0$, define $\iota$ as in the setting of
  Proposition~\ref{prop:orbits_via_ask}.
  Then
  $\orbsymb(\AA^n_{\QQ}/G_0) = \asksymb(\iota)$ in $\cQo$.
  Recall that $\iota$ is immersive.
  Using \S\ref{ss:orbits_via_ask} with~$\iota$ in place of $\theta$,
  construct a commutative group scheme $\mathsf M_\iota \le \Uni_{2n}$.
  By equation~\eqref{eq:ask_as_orbits}, $\orbsymb(\AA^{2n}_\QQ/\mathsf M_\iota) = \q^n
  \asksymb(\iota)$ in~$\cQo$.
  We may thus take $H_0 = \mathsf M_\iota \otimes \QQ$.
\end{proof}

\begin{lemma}[{= Proposition~\ref{prop:reduction}\ref{prop:reduction2}; cf.\ \cite[Prop.\ 2.5]{cico}}]
  \label{lem:red2}
  Let $G_0$ be a unipotent algebraic group of dimension $d$ over $\QQ$.
  Then there exists a unipotent algebraic group $H_0$ of class at most $2$ and
  dimension $3d$ such that
  $\consymb(G_0) = \q^{-d} \consymb(H_0)$ in $\cQo$.
\end{lemma}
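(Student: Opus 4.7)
The plan is to combine Proposition~\ref{prop:k_via_ask_ad} with Proposition~\ref{prop:alt_ask_via_k} in order to realise $\consymb(G_0)$, up to a power of $\q$, as the class-counting invariant of a Baer group scheme attached to the adjoint representation of a $\ZZ$-form of $\Lie(G_0)$.

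First, fix a $\ZZ$-form $\fg$ of the Lie algebra $\fg_0$ of $G_0$, free of rank $d$ as a $\ZZ$-module; concretely, choose an embedding $G_0 \le \Uni_n \otimes \QQ$ and take $\fg = \fg_0 \cap \fn_n(\ZZ)$. The (right) adjoint representation $\ad_\fg \colon \fg \to \End(\fg)$ is then an alternating (because $[a,a]=0$ in any Lie algebra) finite free module representation over $\ZZ$, all three ``sides'' $M$, $V$, and $W$ being of rank $d$.

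Applying Proposition~\ref{prop:k_via_ask_ad} yields $\consymb(G_0) = \asksymb(\ad_\fg)$ in $\cQo$. Since $\ad_\fg$ is alternating with $\rank(W)=d$, Proposition~\ref{prop:alt_ask_via_k} gives
\[
\consymb(\mathsf G_{\ad_\fg}) \;=\; \q^{d}\cdot\asksymb(\ad_\fg) \;=\; \q^{d}\cdot\consymb(G_0)
\]
in $\cQ$, and hence in $\cQo$. Setting $H_0 := \mathsf G_{\ad_\fg} \otimes \QQ$ and rearranging yields $\consymb(G_0) = \q^{-d}\consymb(H_0)$, which is the required identity. The Baer construction recalled in \S\ref{ss:classes_via_ask} guarantees that $\mathsf G_{\ad_\fg}$ is a nilpotent group scheme of class at most~$2$ whose underlying scheme is $\fg \oplus \fg \approx \AA^{2d}$, so $H_0$ is a unipotent algebraic group of class at most~$2$.

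The heavy lifting is done by the cited propositions, so the main obstacle reduces to careful bookkeeping of $\q$-powers and verifying that the recipe of \S\ref{ss:classes_via_ask} applies to $\ad_\fg$. One mild discrepancy is that the Baer construction delivers $\dim H_0 = 2d$ rather than the value $3d$ stated in the lemma; as $2d \le 3d$, this is consistent with the statement but not an exact match. If exact equality is desired, one may instead use the Heisenberg-like scheme $\sH_{\ad_\fg}$ of dimension $3d$ together with Proposition~\ref{prop:k_H}, after establishing the auxiliary identity $\asksymb({}^2((\ad_\fg)^\bullet)) = \asksymb(\ad_\fg)$ in $\cQo$ for alternating $\theta$, which follows from the Knuth-duality symmetries developed in \cite{ask2}.
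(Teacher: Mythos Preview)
Your argument is essentially the paper's own proof: take a $\ZZ$-form $\fg$, invoke Proposition~\ref{prop:k_via_ask_ad} to obtain $\consymb(G_0)=\asksymb(\ad_\fg)$ in $\cQo$, then feed the alternating module representation $\ad_\fg$ into Proposition~\ref{prop:alt_ask_via_k} and set $H_0=\mathsf G_{\ad_\fg}\otimes\QQ$. There is no meaningful difference in strategy.

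You are also right to flag the dimension mismatch: since $\mathsf G_{\ad_\fg}(R)=(\fg\otimes R)\oplus(\fg\otimes R)$, the group $H_0$ produced by this argument (which is exactly the one the paper produces) has dimension $2d$, not $3d$. This is an inaccuracy in the \itemph{statement} of the lemma rather than a defect in your proof; the identity $\consymb(G_0)=\q^{-d}\consymb(H_0)$ is correct as written.

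One caveat: your proposed route to an honest $3d$-dimensional $H_0$ via $\sH_{\ad_\fg}$ and Proposition~\ref{prop:k_H} hinges on the identity $\asksymb({}^2((\ad_\fg)^\bullet))=\asksymb(\ad_\fg)$ in $\cQo$, which you assert follows from Knuth-duality symmetries. This is not something the paper establishes, and for a general alternating $\theta$ the module representations $\theta$ and ${}^2(\theta^\bullet)$ are not isotopic, so the claim would require a separate argument. I would drop that remark or supply an actual proof; the simplest honest fix for the dimension (if one insists on exactly $3d$) is to replace $H_0$ by $H_0\times\mathbf G_{\mathrm a}^d$ and adjust the power of $\q$ accordingly---but then the exponent in $\q^{-d}$ changes, so the cleanest resolution is simply to note that the construction in fact yields dimension $2d$.
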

\begin{proof}
  Given $G_0$, choose $\fg$ as in the setting of
  Proposition~\ref{prop:k_via_ask_ad}.
  Then
  $\consymb(G_0) = \asksymb(\ad_{\fg})$ in $\cQo$.
  Let $H_0 = \mathsf G_{\ad_{\fg}} \otimes \QQ$.
  By Proposition~\ref{prop:alt_ask_via_k}, $\consymb(H_0) = \q^d \asksymb(\ad_{\fg}) = \q^d
  \consymb(G_0)$ in $\cQo$.
\end{proof}

\subsection[Equations and inclusions]{Equations \& inclusions: from $\fAx$ to $\fXx$}
\label{ss:inclusions}

Recall that $\cQx$ denotes either $\cQ$ or $\cQo$.
Define the following $\ZZ[\q^{\pm 1}]$-submodules of $\cQx$:
\begin{align*}
  {}^m\fAx & :=
  \left\langle
    \asksymb({}^m\theta) : \theta\text{ is a finite free module representation
      (over $\ZZ$)}
             \right\rangle_{\ZZ[\q^{\pm 1}]},\\
  {}^m\fAx_\imm & :=
  \left\langle
    \asksymb({}^m\theta) : \theta\text{ is an immersive module representation
      (over $\ZZ$)}
             \right\rangle_{\ZZ[\q^{\pm 1}]},\\
  {}^{-}\fAx & :=
      \left\langle
               \asksymb(\theta) : \theta\text{ is an alternating
               finite free module representation}
      \right\rangle_{\ZZ[\q^{\pm 1}]}, \text{ and}\\
       \fXx & :=
         \Bigl\langle \cntsymb\! X  : X \text{ is a scheme} \Bigr\rangle_{\ZZ[\q^{\pm 1}]}.
\end{align*}

It is easy to see that these $\ZZ[\q^{\pm 1}]$-submodules of $\cQx$ are in
fact $\ZZ[\q^{\pm 1}]$-\itemph{subalgebras}.
(In the cases of ${}^m\fAx$ and ${}^m\fAx_\imm$ this follows since for finite
free module representations $\theta$ and $\tilde \theta$,
we have $\asksymb({}^m\theta)\cdot \asksymb({}^m{\tilde\theta}) =
\asksymb({}^m(\theta\oplus\tilde\theta))$;
cf.\ \cite[Lem.\ 3.1]{ask2}.)
Write $\fAx := {}^1 \fAx$ and $\fAx_\imm := {}^1 \fAx_\imm$.
Clearly, ${}^{mn}\fAx \subset {}^m\fAx$
and ${}^{mn}\fAx_\imm \subset {}^m\fAx_\imm$
for $m,n \ge 1$ and ${}^-\fAx\subset \fAx$.
By Lemma~\ref{lem:assume_immersive}, we have ${}^m\fAo = {}^m\fAo_\imm$ for
$m \ge 1$.
The following equations and inclusions between these algebras will be of
interest to us.

\begin{prop}
  \label{prop:eq_incl}
  \quad
  \begin{enumerate}[(i)]
  \item
    ${}^m\fAx_{\imm} \subset {}^m\fAx \subset \fAx \subset \fXx$ for all $m\ge 1$.
  \item
    $\displaystyle \fAx = \Bigl\langle \cntsymb(\AA^{d+e}/\mathsf
    M_{\theta}) : M\xto\theta\Mat_{d\times e}(\ZZ) \text{ is finite free}; \,
    d,e\ge 0
    \Bigr\rangle_{\ZZ[\q^{\pm 1}]}$
    and
    $\displaystyle \fAx_\imm = \Bigl\langle \cntsymb(\AA^{d+e}\!/\mathsf
    M_{\theta}) : M\xto\theta\Mat_{d\times e}(\ZZ) \text{ is immersive}; \,
    d,e\ge 0
    \Bigr\rangle_{\ZZ[\q^{\pm 1}]}$.
  \item
    $\displaystyle {}^2\fAx = \Bigl\langle \consymb(\mathsf H_\theta) :
    \theta\text{ is a finite free module representation}
    \Bigr\rangle_{\ZZ[\q^{\pm 1}]}$.
  \item
    $\displaystyle {}^-\fAx = \Bigl\langle \consymb(\mathsf G_\theta) :
    \theta\text{ is an alternating finite free module
      representation}\Bigr\rangle_{\ZZ[\q^{\pm 1}]}$.
  \item
  \label{cor:eq_incl5}
    ${}^2\fAx \subset {}^-\fAx$.
  \item
    We have (with an underline added solely for emphasis)
    \begin{align*} \fAo
      &=
        \Bigl\langle \cntsymb (\AA^n_\QQ / G_0)  : G_0\le \Uni_n\otimes
        \QQ \text{ is a unipotent algebraic group, } n\ge 0\Bigr\rangle_{\ZZ[\q^{\pm 1}]}
      \\&=
      \Bigl\langle \cntsymb (\AA^n_\QQ / G_0)  : G_0\le \Uni_n\otimes
      \QQ \text{ is a \underline{commutative} algebraic group, } n\ge 0\Bigr\rangle_{\ZZ[\q^{\pm 1}]}.
    \end{align*}
  \item
    We have (with an underline added solely for emphasis)
    \begin{align*}
      {}^-\fAo
      & =
        \Bigl\langle \consymb (G_0)  : G_0 \text{ is a unipotent
        algebraic group over } \QQ \Bigr\rangle_{\ZZ[\q^{\pm 1}]}
      \\&=
      \Bigl\langle \consymb (G_0)  : G_0 \text{ is a unipotent
      algebraic group \underline{of class $\le 2$} over } \QQ \Bigr\rangle_{\ZZ[\q^{\pm 1}]}.
    \end{align*}
  \end{enumerate}
\end{prop}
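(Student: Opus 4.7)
The plan is to derive each of the seven parts from the dictionary developed in Sections~3.2--3.4 between ask invariants and the group-theoretic quantities $\cntsymb(X/G)$ and $\consymb(G)$, augmented by the reductions of Section~3.5.

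Parts~(i)--(iv) are essentially formal. For~(i), the first inclusion is clear since immersive implies finite free, the second because ${}^m\theta$ is finite free whenever $\theta$ is, and the third follows from Lemma~\ref{lem:ask_scheme}, which writes each $\asksymb(\theta)$ as a $\ZZ[\q^{\pm 1}]$-multiple of $\cntsymb C$ for a scheme~$C$. Part~(ii) is immediate from equation~\eqref{eq:ask_as_orbits} once one recalls that every finite free (resp.\ immersive) module representation is isotopic to one of the form $M \to \Mat_{d\times e}(\ZZ)$ and that $\asksymb$ is an isotopy invariant. For~(iii), use the involutivity $\theta \approx \theta^{\bullet\bullet}$ for finite free~$\theta$: each generator $\asksymb({}^2\phi)$ of ${}^2\fAx$ may be rewritten as $\asksymb({}^2(\theta^\bullet))$ with $\theta = \phi^\bullet$, which by Proposition~\ref{prop:k_H} is a $\ZZ[\q^{\pm 1}]$-multiple of $\consymb(\sH_\theta)$. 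Part~(iv) is immediate from Proposition~\ref{prop:alt_ask_via_k}.

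The main obstacle is~(v). Combining (iii) and (iv), the goal is to prove $\consymb(\sH_\theta) \in {}^-\fAx$ for every finite free $\theta\colon M\to\Hom(V,W)$. A direct computation of commutators in the upper-triangular realisation of $\sH_\theta$ shows that its commutator pairing $\sH_\theta/Z(\sH_\theta) \times \sH_\theta/Z(\sH_\theta) \to [\sH_\theta,\sH_\theta]$ coincides with the alternating bilinear map underlying $\Lambda(\theta)$, and that its centre matches that of the Baer group scheme $\mathsf G_{\Lambda(\theta)}$. Since the class number of a finite class-$\le 2$ group is determined by its order, its centre, and its commutator pairing, this gives $\consymb(\sH_\theta) = \consymb(\mathsf G_{\Lambda(\theta)})$ in $\cQ$, which lies in ${}^-\fAx$ by (iv). In~$\cQo$, a cleaner route is available through Proposition~\ref{prop:k_via_ask_ad} applied to $\sH_\theta$, since the adjoint representation of any Lie algebra is alternating. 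The delicate point is to guarantee the class-number identity uniformly over $\ZZ$, particularly in characteristic~$2$ where the Baer construction requires care; a direct pointwise-in-$q$ verification bypasses any Lazard-style obstruction.

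Part~(vi) follows by combining two containments: equation~\eqref{eq:ask_as_orbits} together with Corollary~\ref{cor:ask_as_orbits}(ii) realises every element of $\fAo$, up to a power of $\q$, as the orbit count of a commutative algebraic subgroup of some $\Uni_n\otimes\QQ$, while Proposition~\ref{prop:orbits_via_ask} places every orbit count of a unipotent algebraic group in $\fAo$. For~(vii), the Baer group schemes in~(iv) are class-$\le 2$, so ${}^-\fAo$ is contained in the class-$\le 2$ module; the reverse containment and the passage from class-$\le 2$ to arbitrary unipotent groups both follow from Proposition~\ref{prop:k_via_ask_ad}, which identifies $\consymb(G_0)$ with $\asksymb(\ad_\fg) \in {}^-\fAo$. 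Alternatively, Lemma~\ref{lem:red2} provides a direct reduction from arbitrary unipotent to class-$\le 2$.
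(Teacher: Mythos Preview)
Your argument is correct and, for parts (i)--(iv), (vi), and (vii), follows the same route as the paper.

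For part~(v) you diverge. The paper invokes the identity $\asksymb(\Lambda(\theta)) = \q^{\rank(M)-\rank(V)}\cdot \asksymb({}^2(\theta^\bullet))$ from \cite[Thm~7.9 and Lem.~3.2]{ask2}, then substitutes $\theta^\bullet$ for $\theta$ to obtain $\asksymb({}^2\theta) = \q^{\rank(V)-\rank(W)}\cdot \asksymb(\Lambda(\theta^\bullet)) \in {}^-\fAx$. You instead establish $\consymb(\sH_\theta) = \consymb(\mathsf G_{\Lambda(\theta)})$ directly, using that over each $\FF_q$ both groups are class-$\le 2$ of the same order with commutator map given (up to sign) by the bilinear form of $\Lambda(\theta)$; since in a class-$\le 2$ group the conjugacy class of $g$ is $g\cdot\{[h,g]:h\in G\}$, the class number is determined by these data alone. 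Your route is more elementary and self-contained (no appeal to \cite{ask2} beyond what is already quoted in Propositions~\ref{prop:k_H} and~\ref{prop:alt_ask_via_k}), and in fact recovers the cited $\asksymb$ identity by combining your equality with those two propositions. The paper's route stays entirely inside the $\asksymb$ calculus and lands on the specific alternating representation $\Lambda(\theta^\bullet)$, which is what reappears later in Lemma~\ref{lem:mth_powers}\ref{lem:mth_powers2}. Your caution about characteristic~$2$ is unnecessary here: the Baer group scheme $\mathsf G_{\Lambda(\theta)}$ is constructed over $\ZZ$ (see \S\ref{ss:classes_via_ask}), so the identity $\consymb(\sH_\theta) = \consymb(\mathsf G_{\Lambda(\theta)})$ holds in $\cQ$, not merely in $\cQo$.
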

\begin{proof}
  \begin{enumerate*}
  \item
    The first two inclusions are clear and the third follows from Lemma~\ref{lem:ask_scheme}.
  \item
    This is a consequence of equation~\eqref{eq:ask_as_orbits}.
  \item
    This follows from Proposition~\ref{prop:k_H}
    and the fact that for each finite free module representation $\theta$,
    the dual $\theta^\bullet$ is also finite free and
    $\theta^{\bullet\bullet}\approx \theta$.
  \item
    This follows from Proposition~\ref{prop:alt_ask_via_k}.
  \item
    Let $\theta\colon M\to\Hom(V,W)$ be a finite free module representation.
    By \cite[Thm 7.9 and Lem.\ 3.2]{ask2},
    $\asksymb(\Lambda(\theta)) = \q^{\rank(M)-\rank(V)} \cdot
    \asksymb({}^2(\theta^\bullet))$ in $\cQ$.
    Hence,
    $\asksymb({}^2\theta) = \q^{\rank(V)-\rank(W)}\cdot
    \asksymb(\Lambda(\theta^\bullet)) \in {}^-\fAx$.
  \item
    Combine Corollary~\ref{cor:ask_as_orbits}(\ref{cor:ask_as_orbits2}),
    Proposition~\ref{prop:orbits_via_ask}, and Lemma~\ref{lem:red1}.
  \item
    Combine Proposition \ref{prop:alt_ask_via_k}, Proposition \ref{prop:k_via_ask_ad},
    and Lemma~\ref{lem:red2}.
  \end{enumerate*}
\end{proof}

In particular, we obtain the following chain of equations and inclusions:
\begin{align}
  \fX
  &\supset \phantom{{}^-}\fA  \,=\,
    \Bigl\langle \cntsymb(\AA^{d+e}/\mathsf
    M_{\theta}) : M\xto\theta\Mat_{d\times e}(\ZZ) \text{ is finite free}
    \Bigr\rangle_{\ZZ[\q^{\pm 1}]} \nonumber \\
   & \supset {}^-\fA  \,=\, \Bigl\langle \consymb (\mathsf G_\theta)  : \theta
     \text{ is alternating and finite free} \Bigr\rangle_{\ZZ[\q^{\pm 1}]}\nonumber\\
  & \supset {}^{\phantom.2}\fA \supset {}^{\phantom.2}\fA_\imm.
  \label{eq:inclusions}
\end{align}

While the author does not know if the inclusion ${}^2\fA_\imm \subset \fX$
is strict, Theorem~\ref{thm:main_density} will imply that ${}^2\fA_\imm$ and $\fX$ have
the same closure within the completion $\cQ_\q$ of $\cQ$ constructed in~\S\ref{ss:topology}.
Theorem~\ref{thm:main_congruence} then follows easily from this; see
\S\ref{ss:combining}.

\begin{rem}
  In \cite[\S 0.2]{BB03},
  the subring $\mathsf{CMot}^+$ of $\cZ$ additively generated by all
  $\cntsymb\! X$ as $X$ ranges over schemes is called the ring of
  \itemph{effective combinatorial motives}.
  As we will explain and exploit in~\S\ref{s:proof},
  our ring $\fX$ lies between $\mathsf{CMot}^+$ and the ring $\mathsf{CMot}$
  of \itemph{combinatorial motives} from~\cite{BB03}.
\end{rem}

\section[Main results]{Main results: Theorems~\ref{thm:main_congruence} and \ref{thm:main_density}}
\label{s:proof}

Recall the definitions of ${}^m\fA_\imm$ and $\fX$ from \S\ref{ss:inclusions}.
In this section, we prove the following.
\begin{thmabc}
  \label{thm:main_density}
  Let $m \ge 1$.
  Then $\overline{{}^m\fA_\imm} = \overline{\fX}$ in $\cQ_\q$.
\end{thmabc}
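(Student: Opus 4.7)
The plan is to prove the two inclusions separately. The inclusion $\overline{{}^m\fA_\imm} \subset \overline{\fX}$ is immediate from Proposition~\ref{prop:eq_incl}(i), which already establishes ${}^m\fA_\imm \subset \fX$.

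For the reverse inclusion $\overline{\fX} \subset \overline{{}^m\fA_\imm}$, I would invoke Lemma~\ref{lem:closure} to reduce the problem to the following: for every scheme $X$ over $\ZZ$ and every $n \ge 1$, produce $y \in {}^m\fA_\imm \cap \cZ_\q$ such that $\card{X(\FF_q)} \equiv y_q \pmod{q^n}$ for all $q \in \PrimePowers$. The key input is Theorem~0.5 of Belkale and Brosnan~\cite{BB03}. In the form adapted to our setting, it should provide---for each scheme $X$ and each $n\ge 1$---finitely many immersive module representations $\theta_i\colon M_i \to \Hom(V_i, W_i)$ over $\ZZ$ and Laurent polynomials $h_i \in \ZZ[\q^{\pm 1}]$ such that
\[
  \card{X(\FF_q)} \equiv \sum_i h_i(q) \cdot \card{C_i^{(m)}(\FF_q)} \pmod{q^n} \qquad (q \in \PrimePowers),
\]
where $C_i^{(m)} := \{(x_1, \ldots, x_m, a) : x_j(a\theta_i) = 0 \text{ for all } j\}$ is the ``$m$-iterated kernel scheme'' attached to $\theta_i$. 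Since ${}^m\theta_i$ is immersive whenever $\theta_i$ is (\S\ref{ss:mreps}), applying Lemma~\ref{lem:ask_scheme} to ${}^m\theta_i$ yields $\cntsymb C_i^{(m)} = \q^{\rank M_i}\cdot \asksymb({}^m\theta_i)$. Consequently,
\[
  y := \sum_i h_i(\q)\cdot \q^{\rank M_i}\cdot \asksymb({}^m\theta_i)
\]
is an element of ${}^m\fA_\imm$ that (after absorbing negative $\q$-powers into the $h_i$, which can be arranged by enlarging the $\theta_i$ via direct sums with trivial immersive factors) also lies in $\cZ_\q$ and satisfies the required congruence modulo $\q^n$.

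The principal obstacle is extracting the precise form of Belkale--Brosnan's density theorem needed here, specifically in terms of the restricted class of $m$-iterated kernel schemes $C_i^{(m)}$ attached to immersive module representations. For $m=1$, this is essentially \cite[Thm~0.5]{BB03} translated through Lemma~\ref{lem:ask_scheme} into the language of finite free module representations. For general $m \ge 1$, the schemes $C_i^{(m)}$ are $m$-fold fibre products of $C_i^{(1)}$ over the common base $\AA^{\rank M_i}$; these remain within the matroid-theoretic class considered in \cite{BB03}, and verifying that the density argument of loc.\ cit.\ goes through for this subclass---uniformly in $m$---is the main technical hurdle.
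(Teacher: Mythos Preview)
Your identification of Belkale--Brosnan as the key external input is correct, but you have misread what \cite[Thm~0.5]{BB03} actually provides. The schemes $Z_\Gamma$ appearing there are \emph{not} kernel schemes of the form $C = \{(x,a): x(a\theta)=0\}$ from Lemma~\ref{lem:ask_scheme}; they are the \emph{maximal rank loci} $V^\gamma_\max = \{a\in M_\Gamma\otimes R : \det(a)\not=0\}$ attached to the immersive representation $\gamma\colon M_\Gamma\hookrightarrow\Mat_n(\ZZ)$ (see \S\ref{ss:rank_loci_and_ask} and \S\ref{ss:belkale_brosnan}). So your claim that ``for $m=1$, this is essentially \cite[Thm~0.5]{BB03} translated through Lemma~\ref{lem:ask_scheme}'' is false: Lemma~\ref{lem:ask_scheme} relates $\asksymb(\theta)$ to the kernel scheme $C$, not to $V^\theta_\max$, and these are entirely different objects. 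Consequently, the congruences you write down do not follow from \cite{BB03}.

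The missing idea is the limit formula of Corollary~\ref{cor:limit}: for any finite free $\theta$ with $\rank(M)=\ell$, one has $\lim_{n\to\infty}\asksymb({}^n\theta)=\q^{-\ell}\cdot\cntsymb V^\theta_\max$ in $\cQ_\q$. This is what bridges Belkale--Brosnan (which produces $\cntsymb V^\gamma_\max$) and the modules ${}^m\fA_\imm$ (which are spanned by $\asksymb({}^m\theta)$). Since $\asksymb({}^{mn}\theta)=\asksymb({}^m({}^n\theta))\in{}^m\fA_\imm$ for all $n$, the limit shows that $\cntsymb V^\gamma_\max\in\overline{{}^m\fA_\imm}$ for every $m\ge 1$ simultaneously---so your proposed ``main technical hurdle'' of reworking \cite{BB03} for $m$-iterated kernel schemes disappears entirely. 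A secondary point you have overlooked is that \cite[Thm~0.5]{BB03} works over $R=\ZZ[\q^{\pm 1},(1-\q^n)^{-1}\ (n\ge 1)]$, not over $\ZZ[\q^{\pm 1}]$; the paper handles this via Lemma~\ref{lem:submodule_closure}, using that $\ZZ[\q^{\pm 1}]$ is dense in $R_\q=\ZZ\llb\q\rrb[\q^{-1}]$.
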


As we will see, Theorem~\ref{thm:main_density} is essentially a stronger
form of Theorem~\ref{thm:main_congruence}.
The role of immersive (rather than merely finite free) module representations
in our arguments is arguably subtle: they allow us to
view the $M_i$ in Theorem~\ref{thm:main_congruence} as closed subgroup schemes
of the $\Uni_{d_i}$; see Remark~\ref{rem:why_immersive}.

\subsection{Rank loci and limits of average sizes of kernels}
\label{ss:rank_loci_and_ask}

Let $M\xto\theta\Hom(V,W)$ be a finite free module representation (over $\ZZ$).
Let $M$, $V$, and $W$ have ranks $\ell$, $d$, and $e$, respectively.
By choosing a $\ZZ$-basis of $M$, we may identify $M = \ZZ^\ell$.
Let $D_i^\theta$ be the subscheme of $\AA^\ell$ representing the functor
\[
  R\leadsto \left\{a \in M\otimes R : \bigwedge\!{}^{i+1} \Bigl(a\theta^R\Bigr) = 0\right\}.
\]

\begin{rem}
  The $D_i^\theta$ are simply the preimages in $\AA^\ell$ of the
  \itemph{degeneracy loci} of the collections of linear maps
  $\AA^d\to \AA^d$ parameterised by $\theta$;
  cf.\ \cite[Ch.\ 14]{Ful98}.
  We can obtain an explicit description of $D_i^\theta$ by replacing $\theta$ by
  an isotopic copy $\ZZ^\ell \to \Mat_{d\times e}(\ZZ)$.
  In this setting, $\theta$ is equivalently described by a matrix of linear
  forms $A(X) \in \Mat_{d\times e}(\ZZ[X_1,\dotsc,X_\ell])$ characterised by
  $a\theta = A(a)$ for $a\in \ZZ^\ell$.
  The scheme $D_i^\theta$ is the subscheme of $\AA^\ell$ defined by the
  vanishing of the $(i+1)\times(i+1)$ minors of $A(X)$.
\end{rem}

Let $V_i^\theta = D_i^\theta \setminus D_{i-1}^\theta$ for $i > 0$ and
$V_0^\theta = D_0(\theta)$.
For our purposes, the most important property of the $V_i^\theta$ is that 
for each field $K$, we have
$V_i^\theta(K) = \left\{ a\in M\otimes K : \rank(a\theta^K) = i\right\}$.
This allows us to express average sizes of kernels derived from $\theta$ in
terms of the $V_i^\theta$.

\begin{lemma}[{Cf.\ \cite[\S 2.1]{ask}}]
  \label{lem:ask_via_rank_loci}
  $\displaystyle \asksymb  ({}^m \theta) = \q^{-\ell} \sum_{i=0}^d
  \cntsymb\! V_i^\theta \cdot \q^{m(d-i)}$ in $\cQ$.
\end{lemma}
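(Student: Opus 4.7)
The statement is an identity in $\cQ \subset \prod_{q\in\PrimePowers}\QQ$, so it suffices to verify it componentwise at each prime power $q$. My plan is to carry out the evaluation at a fixed $\FF_q$ and then recognize the right-hand side.

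The starting point is the definition $\ask({}^m\theta^{\FF_q}) = q^{-\ell}\sum_{a\in M\otimes \FF_q}\card{\Ker(a({}^m\theta^{\FF_q}))}$, using $\card{M\otimes \FF_q}=q^\ell$. Since ${}^m\theta$ is defined so that $a({}^m\theta) = (a\theta)^{\oplus m}$, the kernel in question is simply the $m$-fold direct sum of $\Ker(a\theta^{\FF_q}) \subset V\otimes \FF_q$; its cardinality is therefore $q^{m\,\dim\Ker(a\theta^{\FF_q})} = q^{m(d-\rank(a\theta^{\FF_q}))}$.

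The next step is to partition the summation domain $M\otimes\FF_q$ according to the rank $i=\rank(a\theta^{\FF_q})\in\{0,1,\dots,d\}$. By the defining property of the $V_i^\theta$ recalled just before the lemma, the set $\{a\in M\otimes \FF_q:\rank(a\theta^{\FF_q}) = i\}$ is exactly $V_i^\theta(\FF_q)$. Grouping the summands by rank therefore gives
\[
\ask({}^m\theta^{\FF_q}) \;=\; q^{-\ell}\sum_{i=0}^d \card{V_i^\theta(\FF_q)}\cdot q^{m(d-i)}.
\]
Reading this off for every $q\in\PrimePowers$ yields the claimed identity in $\cQ$.

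There is essentially no obstacle here: the only things to check are that $V_i^\theta(K)$ really equals the stated rank stratum for the field $K=\FF_q$ (which is immediate from the scheme-theoretic definition via the vanishing of $(i+1)\times(i+1)$ minors) and that the kernel of a direct sum $(a\theta)^{\oplus m}$ is the direct sum of the kernels (a triviality of linear algebra). Everything else is bookkeeping of exponents of $q$.
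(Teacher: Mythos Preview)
Your proof is correct and follows essentially the same approach as the paper: fix a prime power $q$, partition $M\otimes\FF_q$ into the rank strata $V_i^\theta(\FF_q)$, observe that $\card{\Ker(a({}^m\theta^{\FF_q}))}=q^{m(d-i)}$ on the $i$th stratum, and sum. The only cosmetic difference is that you phrase the kernel computation via ``kernel of a direct sum is the direct sum of kernels'' while the paper phrases it via ``$\rank((a\theta)^{\oplus m})=mi$''; these are of course equivalent.
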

\begin{proof}
  Fix $q\in\PrimePowers$.
  Write $\theta_q = \theta^{\FF_q}$ and
  note that $({}^m\theta)_q = {}^m(\theta_q)$.
  We partition $M\otimes \FF_q$ into the sets $Z_i := V_i^\theta(\FF_q)$ for
  $i=0,\dotsc,d$.
  For each $a\in Z_i$, since $\rank(a\theta_q) = i$, we obtain $\rank(a
  ({}^m\theta_q)) = mi$ and thus $\card{\Ker(a ({}^m\theta_q))} = q^{m(d-i)}$.
  We therefore find that
  $\ask ({}^m\theta_q) = q^{-\ell} \sum_{i=0}^d \card{Z_i} \cdot
  q^{m(d-i)}$.
\end{proof}

Write $V^\theta_\max := V^\theta_d$.
The following simple observation is key: while $\asksymb(\theta)$ might be too
coarse an invariant for us to recover any of the $\cntsymb\! V_i^\theta$ from it,
we \itemph{can} recover $\cntsymb\! V^\theta_\max$ from the $\q$-adic limit of the
sequence $\asksymb ({}^m\theta)$ as $m\to \infty$.

\begin{cor}
  \label{cor:limit}
  $\displaystyle \lim_{m\to \infty}\asksymb ({}^m\theta) = \q^{-\ell} \cdot
  \cntsymb\! V_\max^\theta$ in $\cQ_\q$.
\end{cor}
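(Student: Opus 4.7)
The plan is to read off the corollary directly from Lemma~\ref{lem:ask_via_rank_loci} by isolating the top-rank contribution and checking that the remaining terms are $\q$-adically small.

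First, I would apply Lemma~\ref{lem:ask_via_rank_loci} and separate the $i=d$ summand from the rest. Since $V_d^\theta = V_\max^\theta$, this yields the identity
\[
\asksymb({}^m\theta) - \q^{-\ell}\cdot \cntsymb\! V_\max^\theta
\;=\; \q^{-\ell} \sum_{i=0}^{d-1} \cntsymb\! V_i^\theta \cdot \q^{m(d-i)}
\]
in $\cQ$, and hence also in $\cQ_\q$.

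Next, I would argue that the right-hand side tends to $0$ in the topology on $\cQ_\q$ defined in~\S\ref{ss:topology}. Each $\cntsymb\! V_i^\theta$ lies in $\cZ \subset \cZ_\q$, and for every $i < d$ the exponent $d-i$ is at least $1$, so each summand lies in $\q^{m-\ell}\cZ_\q$. Recalling that the topology on $\cQ_\q$ is the direct-limit topology along multiplication by $\q$, the subsets $\q^N\cZ_\q$ form a neighbourhood basis of $0$, and therefore $\q^{-\ell}\sum_{i=0}^{d-1}\cntsymb\! V_i^\theta\cdot\q^{m(d-i)} \to 0$ as $m\to\infty$.

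There is no real obstacle here: the content is entirely in Lemma~\ref{lem:ask_via_rank_loci}, and the corollary is a routine passage to the $\q$-adic limit. The only thing worth being careful about is that the shift by $\q^{-\ell}$ sits inside the larger ring $\cQ_\q$ rather than $\cZ_\q$, which is exactly why we work in $\cQ_\q$ rather than in $\cZ_\q$ alone.
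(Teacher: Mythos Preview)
Your proposal is correct and follows essentially the same approach as the paper: both isolate the $i=d$ term in Lemma~\ref{lem:ask_via_rank_loci} and observe that for $i<d$ the factor $\q^{m(d-i)}$ forces the remaining summands into $\q^m\cZ_\q$, hence to zero $\q$-adically. Your remark about the $\q^{-\ell}$ shift placing the computation in $\cQ_\q$ rather than $\cZ_\q$ is a helpful clarification but not a departure from the paper's argument.
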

\begin{proof}
  For $i = 0,\dotsc, d-1$, the summand
  $\cntsymb\! V_i^\theta \cdot \q^{m(d-i)}$
  in Lemma~\ref{lem:ask_via_rank_loci} is divisible by $\q^m$ in $\cZ_\q$.
  Therefore, each such summand tends to zero in $\cZ_\q$ as $m\to \infty$.
\end{proof}

\begin{cor}
  \label{cor:V_in_A}
  Let $m\ge 1$.
  The closure of ${}^m\fA$ (resp.\ ${}^m\fA_\imm$)
  in $\cQ_\q$ contains $\cntsymb V^\theta_\max$ for each finite free
  (resp.\ immersive) module representation $\theta$.
\end{cor}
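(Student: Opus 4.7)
The plan is to exhibit $\cntsymb V_\max^\theta$ as a $\q$-adic limit of elements that already belong to ${}^m\fA$ (resp.\ ${}^m\fA_\imm$), so that Corollary~\ref{cor:limit} does essentially all the work.

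First, I would fix a finite free (resp.\ immersive) module representation $\theta\colon M\to \Hom(V,W)$ and set $\ell = \rank(M)$. The key observation is that for each $k\ge 1$, the module representation ${}^k\theta$ is again finite free, and immersive whenever $\theta$ is; this stability of both classes under the operation ${}^k(\cdot)$ is recorded in \S\ref{ss:mreps}. Taking $\tilde\theta = {}^k\theta$, the element
$$\asksymb({}^m\tilde\theta) \,=\, \asksymb\bigl({}^m({}^k\theta)\bigr) \,=\, \asksymb({}^{mk}\theta)$$
is therefore among the generators of ${}^m\fA$ (resp.\ ${}^m\fA_\imm$).

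Next, I would let $k\to\infty$. Since $mk\to\infty$ as well, Corollary~\ref{cor:limit} applied to $\theta$ gives
$$\lim_{k\to\infty} \asksymb({}^{mk}\theta) \,=\, \q^{-\ell}\cdot \cntsymb V_\max^\theta$$
in the topology of $\cQ_\q$. Multiplication by the fixed element $\q^\ell\in\ZZ[\q^{\pm 1}]$ is continuous on the topological ring $\cQ_\q$, so the sequence $\bigl(\q^\ell\asksymb({}^{mk}\theta)\bigr)_{k\ge 1}$---all of whose terms lie in ${}^m\fA$ (resp.\ ${}^m\fA_\imm$), these being $\ZZ[\q^{\pm 1}]$-submodules---converges to $\cntsymb V_\max^\theta$. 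Hence $\cntsymb V_\max^\theta$ lies in $\overline{{}^m\fA}$ (resp.\ $\overline{{}^m\fA_\imm}$).

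The argument is essentially formal once Corollary~\ref{cor:limit} is in hand, so I do not anticipate a serious technical obstacle. The one point that genuinely deserves a moment's thought is the stability of the immersive class under ${}^k(\cdot)$; this is precisely why working with the iterated construction ${}^k\theta$ (rather than, say, ad hoc direct sums of distinct module representations) is the natural choice, and it is also the reason the argument yields the stronger immersive version in addition to the finite free one.
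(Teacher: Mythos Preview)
Your proof is correct and follows exactly the same route as the paper's: observe that $\asksymb({}^{mk}\theta)=\asksymb({}^m({}^k\theta))\in{}^m\fA$ (resp.\ ${}^m\fA_\imm$) and then pass to the limit $k\to\infty$ via Corollary~\ref{cor:limit}. The paper's argument is simply a terser version of yours; your added remarks on the $\ZZ[\q^{\pm 1}]$-module structure and the stability of immersiveness under ${}^k(\cdot)$ make explicit what the paper leaves implicit.
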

\begin{proof}
  $\asksymb({}^{mn}\theta) = \asksymb ({}^m({}^n\theta)) \in {}^m\fA$
  and
  $\lim\limits_{n\to \infty}\asksymb ({}^{mn}\theta) = \q^{-\ell} \cdot
  \cntsymb\! V_\max^\theta$ in $\cQ_\q$.
\end{proof}

\subsection{Consequences of the work of Belkale and Brosnan}
\label{ss:belkale_brosnan}

Let $\Gamma$ be a (finite simple) graph.
Let $v_1,\dotsc,v_n$ be the distinct vertices of $\Gamma$.
We use $\sim$ to signify adjacency between vertices.
Define a module of matrices
\[
  M_\Gamma = \left\{ x \in \Mat_n(\ZZ) : x = x^\top \text{ and } x_{ij} = 0 \text{
    whenever } v_i \sim v_j \right\}.
\]
Note that $M_\Gamma$ is a free $\ZZ$-module of rank $\binom{n+1}2 - m$, where
$m$ denotes the number of edges of $\Gamma$.
Let $\gamma = \gamma(\Gamma)$ denote the inclusion $M_\Gamma \incl \Mat_n(\ZZ)$.
It is easy to see that $\gamma$ is an immersive module representation.
The isotopy type of $\gamma$ does not depend on the chosen ordering of the
vertices of $\Gamma$.

The scheme $V^\gamma_\max$ (as defined in \S\ref{ss:rank_loci_and_ask})
coincides with $Z_\Gamma$ in the notation of \cite[\S\S 0.3, 3.2]{BB03}.
Let $R = \ZZ[\q^{\pm 1},\, (1 - \q^n)^{-1} \,\,\,(n\ge 1)]$
and $\tilde\cZ = \cZ \otimes_{\ZZ[\q]} R$.
Each $\q^n-1$ for $n\ge 1$ is regular in $\cZ$.
We may thus regard $\cZ$ as a subring of $\tilde\cZ$
and $\tilde\cZ$ as a subring  of $\prod_{\PrimePowers}\QQ$.
The following is one of the main results of Belkale and Brosnan~\cite{BB03}.

\begin{thm}[{Cf.\ \cite[Thm 0.5]{BB03}}]
  \label{thm:belkale_brosnan}
  Within $\tilde \cZ$, we have
  \[
    \left\langle \cntsymb X : X \text { is a scheme }
    \right\rangle_R
    =
    \left\langle \cntsymb V^\gamma_\max : \Gamma \text{ is a graph}
    \right\rangle_R.
  \]
\end{thm}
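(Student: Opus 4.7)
The plan is to prove the two inclusions separately, with ``$\supseteq$'' essentially free and ``$\subseteq$'' carrying all the content. For the easy direction, observe that $V_{\max}^\gamma = D_n^\gamma \setminus D_{n-1}^\gamma$ is a locally closed subscheme of the affine space attached to $M_\Gamma$, so
\[
  \cntsymb\! V_{\max}^\gamma = \cntsymb\! D_n^\gamma - \cntsymb\! D_{n-1}^\gamma
\]
already lies in the $\ZZ[\q^{\pm 1}]$-submodule of $\tilde\cZ$ generated by point counts of schemes, hence in the $R$-submodule.

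For the nontrivial inclusion, which is the content of \cite[Thm~0.5]{BB03}, I would organise the argument as a chain of reductions, each preserving membership in the $R$-span of the $\cntsymb\! V_{\max}^\gamma$. First, reduce to affine schemes via inclusion--exclusion on a finite affine open cover (cf.\ the remark following Theorem~\ref{thm:main_congruence}); then reduce an affine scheme $V(f_1,\dotsc,f_r) \subset \AA^\ell$ to the case of single hypersurfaces by a second inclusion--exclusion across the $V(f_i)$. Next, reduce a hypersurface of arbitrary degree to a system of equations of degree at most two by introducing auxiliary variables naming high-degree monomials: each substitution $z = x_ix_j$ is enforced by a single quadratic relation, and the resulting system differs in its point count from the original by an explicit power of~$\q$, which is a unit in~$R$.

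A system of quadratic equations on $\AA^\ell$ is encoded by a single symmetric matrix of bilinear forms with a prescribed zero pattern, and such matrix modules are exactly the $M_\Gamma$ whose edges record the forced zeros. A telescoping inclusion--exclusion between the closed determinantal loci $D_i^\gamma$ rewrites any condition of fixed rank in terms of the maximal-rank loci $V_{\max}^\gamma$, so one can in principle express the count of the quadratic variety in this form.

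The main obstacle is the last step: the naive reduction tends to produce symmetric matrix spaces with constraints richer than merely ``prescribed zeros,'' so that they are not on the nose of the form $M_\Gamma$. Extracting the $V_{\max}^\gamma$-contributions requires a universality-type argument in which the correction factors between different strata are rational functions of $\q$ whose denominators are products of the elements $\q^n-1$; this is precisely why the localisations $(1-\q^n)^{-1}$ are built into $R$ in the first place. Verifying that, in the end, no further denominators are needed --- so that the claimed equality really holds in $\tilde\cZ$ --- is the technical heart of \cite{BB03}.
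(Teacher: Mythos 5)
The paper does not prove Theorem~\ref{thm:belkale_brosnan}; it is a direct citation of \cite[Thm~0.5]{BB03}. The only ``proof'' content the paper supplies is the sentence preceding the statement, namely the observation that $V^\gamma_\max$ agrees with the scheme $Z_\Gamma$ in Belkale--Brosnan's notation, after which the theorem is literally their Theorem~0.5 transcribed into the notation of this paper. You instead set out to re-derive the result, which is a fundamentally different (and unnecessary) undertaking here.

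Your sketch of the ``$\subseteq$'' direction has genuine gaps and, more importantly, misses the actual driving mechanism of \cite{BB03}. Their proof is not a sequence of elementary inclusion--exclusion reductions culminating in a telescoping across determinantal loci. It hinges on a universality statement in the style of Mn\"ev: point counts of arbitrary schemes of finite type are first reduced to counting representations of matroids, and it is matroid representation varieties --- not arbitrary systems of quadrics --- that are connected to the graph-theoretic loci $Z_\Gamma = V^\gamma_\max$. Your sketch never mentions matroids or universality; instead, at the crucial step you assert that ``a system of quadratic equations on $\AA^\ell$ is encoded by a single symmetric matrix of bilinear forms with a prescribed zero pattern, and such matrix modules are exactly the $M_\Gamma$.'' This is not correct as stated: a system of several quadratic forms is a collection of symmetric matrices, not a single one, and even a single symmetric matrix with a ``prescribed zero pattern'' need not arise as $M_\Gamma$ for a graph $\Gamma$ (the space $M_\Gamma$ specifically allows arbitrary diagonal entries and imposes vanishing exactly on off-diagonal positions indexed by edges). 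There is also a small error earlier: introducing a new variable $z$ together with the constraint $z = x_ix_j$ leaves the point count unchanged rather than altering it by a power of $\q$. Finally, you openly concede that ``verifying that \dots no further denominators are needed \dots is the technical heart of \cite{BB03},'' which is precisely the part your outline does not address. For the purposes of this paper, the correct move is to verify the notational dictionary ($V^\gamma_\max = Z_\Gamma$, the left-hand module is $\mathsf{CMot}$) and cite \cite[Thm~0.5]{BB03}; the ``$\supseteq$'' inclusion you give is fine, and is in fact even simpler, since the locally closed subscheme $V^\gamma_\max$ is already itself a scheme of finite type.
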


\begin{rem}
  In \cite{BB03},
  the module on the left-hand side of the preceding equation is
  denoted by $\mathsf{CMot}$ and called the ring of \itemph{combinatorial motives}.
\end{rem}

\subsection[Combining the pieces]{Combining the pieces: proofs of
  Theorem~\ref{thm:main_density} and \ref{thm:main_congruence}}
\label{ss:combining}

As $(1 - \q^n)^{-1} = \sum\limits_{k=0}^\infty \q^{kn}$ in $\cZ_\q$, we
may regard $\tilde \cZ$ as a subring of $\cQ_\q$.
Let $R_\q = \ZZ\llb \q\rrb[\q^{-1}] \subset \cQ_\q$.
Then $\ZZ[\q^{\pm 1}]\subset R\subset R_\q$
and $\overline{\ZZ[\q^{\pm 1}]} = \overline R = \overline{R_\q} = R_\q$ in $\cQ_\q$.
The following is elementary.

\begin{lemma}
  \label{lem:submodule_closure}
  Let $B$ be a topological ring and let $M$ be a topological $B$-module.
  Let $A\subset B$ be a dense subring.
  Let $U\subset M$ be an $A$-submodule and let $V$ be the $B$-submodule of $M$
  generated by $U$.
  Then $\bar U = \bar V$.
\qed
\end{lemma}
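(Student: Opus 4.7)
The plan is to verify both inclusions of $\bar U = \bar V$. The inclusion $\bar U \subset \bar V$ is immediate from $U \subset V$, so the real content lies in the reverse inclusion, which I would reduce to showing the stronger statement $V \subset \bar U$; taking closures then gives $\bar V \subset \overline{\bar U} = \bar U$.

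To establish $V \subset \bar U$, I would take an arbitrary element $v \in V$, write it as a finite sum $v = \sum_{i=1}^n b_i u_i$ with $b_i \in B$ and $u_i \in U$, and approximate $v$ by elements of $U$. The guiding idea is that, since $A$ is dense in $B$, the scalars $b_i$ can be replaced by suitably close elements $a_i \in A$; the sum $\sum_i a_i u_i$ then automatically lies in $U$ (because $U$ is an $A$-submodule), while remaining close to $v$ thanks to the continuity built into the notion of a topological $B$-module.

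To make this precise, I would fix an arbitrary neighborhood $W$ of $v$ in $M$ and produce an element of $U \cap W$. Continuity of addition in $M$ supplies neighborhoods $W_i$ of $b_i u_i$ satisfying $W_1 + \dotsb + W_n \subset W$. Continuity of the $B$-action $B \times M \to M$, applied at the points $(b_i, u_i)$, then supplies neighborhoods $N_i$ of $b_i$ in $B$ with $N_i \cdot u_i \subset W_i$. Density of $A$ in $B$ lets me choose $a_i \in N_i \cap A$ for each~$i$, and the element $\sum_i a_i u_i$ then lies in $U \cap W$ as required.

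I do not expect any genuine obstacle in this argument; it is a standard density-plus-continuity exercise. The only care required is to invoke continuity of the module operations in the correct form---separately in each variable suffices, which is subsumed by the definition of a topological $B$-module---and to keep the finite-sum bookkeeping straight, which is why I decompose $W$ through the intermediate neighborhoods $W_i$ before approximating the scalars.
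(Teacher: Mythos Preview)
Your argument is correct and is precisely the standard density-plus-continuity verification one has in mind here; the paper itself omits the proof entirely, simply flagging the lemma as elementary with a \qed.
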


\begin{cor}
  $\bar\fX
  =
  \overline{
    \left\langle \cntsymb V^\gamma_\max : \Gamma \text{ is a graph}
    \right\rangle_{\ZZ[\q^{\pm 1}]}}$ within $\cQ_\q$.
\end{cor}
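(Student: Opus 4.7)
The plan is to deduce this identity as a direct consequence of the Belkale--Brosnan theorem (Theorem~\ref{thm:belkale_brosnan}) combined with Lemma~\ref{lem:submodule_closure}. The essential observation is that after taking closures in $\cQ_\q$, the distinction between $\ZZ[\q^{\pm 1}]$-spans and $R$-spans disappears: indeed, $\ZZ[\q^{\pm 1}]$ is dense in $R_\q$ (as noted in the paragraph preceding Lemma~\ref{lem:submodule_closure}), and $R$ sits inside $R_\q \subset \cQ_\q$ via the identifications $(1-\q^n)^{-1} = \sum_{k\ge 0}\q^{kn}$.

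First, I would transport the Belkale--Brosnan equality of $R$-submodules of $\tilde\cZ$ to an equality of $R_\q$-submodules of $\cQ_\q$:
\begin{equation*}
\bigl\langle \cntsymb X : X \text{ is a scheme}\bigr\rangle_{R_\q}
= \bigl\langle \cntsymb V^\gamma_\max : \Gamma \text{ is a graph}\bigr\rangle_{R_\q}.
\end{equation*}
This is immediate from Theorem~\ref{thm:belkale_brosnan}, since each $\cntsymb X$ is an $R$-combination, hence an $R_\q$-combination, of the $\cntsymb V^\gamma_\max$ and vice versa; the ring embedding $\tilde\cZ \hookrightarrow \cQ_\q$ lets us interpret both sides inside the topological ambient $\cQ_\q$.

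Next, I would apply Lemma~\ref{lem:submodule_closure} twice, each time with $A = \ZZ[\q^{\pm 1}]$, $B = R_\q$, and $M = \cQ_\q$. Applied with $U = \fX$, it yields $\bar\fX = \overline{\langle \cntsymb X\rangle_{R_\q}}$. Applied with $U$ the $\ZZ[\q^{\pm 1}]$-span of the $\cntsymb V^\gamma_\max$, it yields $\overline{\langle \cntsymb V^\gamma_\max\rangle_{\ZZ[\q^{\pm 1}]}} = \overline{\langle \cntsymb V^\gamma_\max\rangle_{R_\q}}$. Chaining these two identities with the $R_\q$-module equality from the previous step delivers the claim.

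The only point requiring any care is verifying that the Belkale--Brosnan identity, formally stated inside $\tilde\cZ$, really transports to an identity of $R_\q$-submodules of $\cQ_\q$. This is routine given the ring embedding $\tilde\cZ \hookrightarrow \cQ_\q$ together with $R \subset R_\q$; beyond this bookkeeping there is no genuine difficulty, and the corollary is essentially a direct consequence of Theorem~\ref{thm:belkale_brosnan} and Lemma~\ref{lem:submodule_closure}.
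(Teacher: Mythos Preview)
Your proposal is correct and follows essentially the same approach as the paper: both arguments combine Theorem~\ref{thm:belkale_brosnan} with Lemma~\ref{lem:submodule_closure}, using that $\ZZ[\q^{\pm 1}]$ is dense in $R_\q$ so that closures of $\ZZ[\q^{\pm 1}]$-spans and $R_\q$-spans agree. The only cosmetic difference is that you first extend the Belkale--Brosnan identity to $R_\q$-spans and then apply the lemma once on each side, whereas the paper records the chain $\overline{U_{\ZZ[\q^{\pm 1}]}} = \overline{U_R} = \overline{U_{R_\q}}$ and invokes the $R$-level equality directly.
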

\begin{proof}
  Given a subring $A\subset R_\q$,
  define $A$-submodules 
   $U_A = \left\langle \cntsymb\! X : X \text { is a scheme } \right\rangle_A$
  and $V_A = \left\langle \cntsymb\! V^\gamma_\max : \Gamma \text{ is a graph}
  \right\rangle_A$ of $\cQ_\q$.
  By Lemma~\ref{lem:submodule_closure},
  $\overline{U_{\ZZ[\q^{\pm 1}]}} = \overline{U_R} = \overline{U_{R_\q}}$ and
  analogously for ``$V$''.
  Theorem~\ref{thm:belkale_brosnan} shows that $U_R = V_R$ in $\cQ_\q$.
\end{proof}

\begin{proof}[Proof of Theorem~\ref{thm:main_density}]
  We know from  Proposition~\ref{prop:eq_incl} that ${}^m \fA_\imm \subset
  \mathfrak X$.
  By Corollary~\ref{cor:V_in_A}, for each immersive
  module representation $\theta$, we have
  $\cntsymb\! V^\theta_\max\in\overline{{}^m\mathfrak A_\imm}$.
  Now apply the preceding corollary.
\end{proof}

\begin{proof}[Proof of Theorem~\ref{thm:main_congruence}]
  Equation~\eqref{eq:inclusions} and Theorem~\ref{thm:main_density} together imply
  that $\fA_\imm$ and ${}^-\fA$ are dense in $\overline{\fX}$ within
  $\cQ_\q$.
  Using our descriptions of $\fA_\imm$ and ${}^-\fA$ in terms of numbers of
  linear orbits and class numbers, respectively (see
  Proposition~\ref{prop:eq_incl}), Theorem~\ref{thm:main_congruence} follows
  from Lemma~\ref{lem:closure}.
\end{proof}

\begin{rem}
  \label{rem:why_immersive}
  The restriction to immersive module representations (via $\fA_\imm$) in the
  preceding proof is what allows us to view each $M_i$ in
  Theorem~\ref{thm:main_congruence} as a closed subgroup scheme
  of $\Uni_{d_i}$.
  If we were to instead consider all finite free module representations (via
  $\fA$), then each $M_i$ would still come with an associated homomorphism
  $M_i \to \Uni_{d_i}$ (hence an action on $\AA^{d_i}$), but this would not
  necessarily be a closed immersion.
  The conclusion of Theorem~\ref{thm:main_congruence} would otherwise remain
  unaffected.
\end{rem}

\subsection{Towards an explicit Theorem~\ref{thm:main_congruence} (mod \cite{BB03})}

In this final subsection, we briefly sketch a slightly more explicit way of
deriving Theorem~\ref{thm:main_congruence} from
Theorem~\ref{thm:belkale_brosnan}.
This is obtained by unravelling our arguments from above.
First, let $M\xto\theta\Hom(V,W)$ be any finite free module representation.
Write $\ell$, $d$, and $e$ for $\rank(M)$, $\rank(V)$, and $\rank(W)$,
respectively.

\begin{lemma}
  \label{lem:mth_powers}
  Let $m\ge 1$. Then:
  \begin{enumerate}[(i)]
  \item
    \label{lem:mth_powers1}
    $\asksymb({}^m\theta) = \q^{-me} \cdot \cntsymb(\AA^{m(d+e)} / \mathsf
    M_{{}^m\theta})$.
  \item
    \label{lem:mth_powers2}
    $\asksymb({}^{2m}\theta) = \q^{m(d-e)-\ell}\cdot  \consymb(\mathsf G_{\Lambda(({}^m\theta)^\bullet)})$.
  \end{enumerate}
\end{lemma}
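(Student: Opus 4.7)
The plan is to derive both parts directly by specialising formulas already established in the excerpt, applied to suitable modifications of $\theta$.

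For part~\ref{lem:mth_powers1}, I would simply apply equation~\eqref{eq:ask_as_orbits} not to $\theta$ itself but to ${}^m\theta$. The representation ${}^m\theta$ is finite free with ambient modules of ranks $md$ and $me$, so after replacing it by an isotopic copy of the shape $M\to\Mat_{md\times me}(\ZZ)$ (which preserves both $\asksymb$ and the isomorphism type of $\mathsf M_{{}^m\theta}$) the cited formula yields
\[
\cntsymb(\AA^{m(d+e)}/\mathsf M_{{}^m\theta}) = \q^{me}\cdot\asksymb({}^m\theta),
\]
which rearranges to the claimed identity.

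For part~\ref{lem:mth_powers2}, I would chain together three ingredients. First, set $\tau := \Lambda(({}^m\theta)^\bullet)$; this is alternating and finite free, with codomain $M^*$ of rank $\ell$. Proposition~\ref{prop:alt_ask_via_k} then gives $\consymb(\mathsf G_\tau) = \q^\ell\cdot\asksymb(\tau)$. Second, I would invoke the identity $\asksymb(\Lambda(\sigma)) = \q^{\rank(M_\sigma)-\rank(V_\sigma)}\cdot\asksymb({}^2(\sigma^\bullet))$ already used in the proof of Proposition~\ref{prop:eq_incl}(v) (via \cite[Thm~7.9, Lem.~3.2]{ask2}), taking $\sigma = ({}^m\theta)^\bullet\colon W^{*\oplus m}\to\Hom(V^{\oplus m},M^*)$; the relevant ranks are $me$ for its source and $md$ for its ``$V$'', producing a prefactor $\q^{m(e-d)}$. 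Third, I would use the isotopy $\sigma^{\bullet\bullet}\approx\sigma$ (valid for finite free $\sigma$) and the identification ${}^2({}^m\theta) = {}^{2m}\theta$ to recognise ${}^2((({}^m\theta)^\bullet)^\bullet)$ as ${}^{2m}\theta$. Combining these yields
\[
\consymb(\mathsf G_\tau) = \q^{\ell+m(e-d)}\cdot\asksymb({}^{2m}\theta),
\]
and rearranging gives the stated formula.

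No serious obstacle is anticipated: both parts are formal consequences of identities already collected in \S\ref{s:ask}. The only points requiring slight care are the bookkeeping of ranks under the Knuth dual $(\cdot)^\bullet$ and the alternating hull $\Lambda(\cdot)$, and the routine verification that the duality and stacking operations commute up to isotopy well enough to identify ${}^2((({}^m\theta)^\bullet)^\bullet)$ with ${}^{2m}\theta$.
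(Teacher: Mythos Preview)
Your proposal is correct and follows essentially the same route as the paper. For part~\ref{lem:mth_powers2} the paper applies the already-packaged identity $\asksymb({}^2\theta) = \q^{d-e}\cdot \asksymb(\Lambda(\theta^\bullet))$ (the conclusion of the proof of Proposition~\ref{prop:eq_incl}\ref{cor:eq_incl5}) directly to ${}^m\theta$ and then invokes Proposition~\ref{prop:alt_ask_via_k}, whereas you invoke the underlying identity for $\Lambda(\sigma)$ with $\sigma = ({}^m\theta)^\bullet$ and then appeal to $\sigma^{\bullet\bullet}\approx\sigma$; these differ only in the order of unwinding and not in substance.
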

\begin{proof}
  \begin{enumerate*}
  \item This follows from equation~\eqref{eq:ask_as_orbits}.
  \item
    We have
    $\asksymb({}^2\theta)  = \q^{d-e}\cdot \asksymb(\Lambda(\theta^\bullet))$
    (see the proof of Proposition~\ref{prop:eq_incl}\ref{cor:eq_incl5}).
    Hence, using Proposition~\ref{prop:alt_ask_via_k}, we obtain
  \end{enumerate*}
  $\asksymb({}^{2m}\theta) = \asksymb({}^2({}^m\theta)) = \q^{m(d-e)}\cdot \asksymb(\Lambda(({}^m\theta)^\bullet))
  = \q^{m(d-e)-\ell}\cdot \consymb(\mathsf G_{\Lambda(({}^m\theta)^\bullet)})$.
\end{proof}

As abstract group schemes, $\mathsf M_\theta \approx \mathsf
M_{{}^m\theta}$ for all $m\ge 1$.
Suppose that $\theta$ is immersive so that we may view
$\mathsf M_{{}^m\theta}$ as a closed subgroup scheme of $\GL_{m(d+e)}$.
Within $\GL_{m(d+e)}$, the group scheme $\mathsf M_{{}^m\theta}$ is then conjugate to
\[
  R\leadsto
  \left\{
    \diag\Bigl(
    \Bigl[\begin{smallmatrix} 1_d & a\theta^R \\ 0_{e\times d} & 1_e\end{smallmatrix}\Bigr],
    \dotsc,
    \Bigl[\begin{smallmatrix} 1_d & a\theta^R \\ 0_{e\times d} & 1_e\end{smallmatrix}\Bigr]\Bigr)
    : a \in M\otimes R
  \right\}.
\]

For each ring $R$, the underlying set of the Baer group scheme
$\mathsf G_{\Lambda(\theta^\bullet)}(R)$ is
$(W^*\otimes R)\times (V\otimes R)\times (M^*\otimes R)$.
We may naturally regard the right factor $M^*\otimes R$ as a central subgroup
of $\mathsf G_{\Lambda(\theta^\bullet)}(R)$.
We leave it to the reader to verify that for each $m\ge 1$,
the group $\mathsf G_{\Lambda(({}^m\theta)^\bullet)})(R)$ is the $m$-fold central
power of $\mathsf G_{\Lambda(\theta^\bullet)}(R)$ in which the copies of $M^*\otimes
R$ are identified.

Now, let the setting be as in Theorem~\ref{thm:main_congruence}.
By Theorem~\ref{thm:belkale_brosnan}, there
are graphs $\Gamma_1,\dotsc,\Gamma_r$ and rational functions
$h_1(X),\dotsc,h_r(X)\in S := \ZZ[X^{\pm 1}; (1-X^n)^{-1} \,\,(n\ge 1)]$ such that
$\cntsymb\! Y = \sum_{i=1}^r h_i(\q) \cdot \cntsymb\! V^{\gamma_i}_{\max}$;
here, $\gamma_i\colon M_{\Gamma_i}\incl \Mat_{n_i}(\ZZ)$ denotes the (immersive)
module representation attached to $\Gamma_i$ as in \S\ref{ss:belkale_brosnan}.
Let $\Gamma_i$ have $m_i$ edges so that $M_{\Gamma_i}$ has
rank $\ell_i := \binom{n_i+1} 2 - m_i$ as a $\ZZ$-module.
For $m\ge 1$, define
\[
  H_m := \sum_{i=1}^r h_i(\q)\q^{\ell_i} \cdot \ask({}^m\gamma_i).
\]
By Corollary~\ref{cor:limit}, $\lim\limits_{m\to \infty}H_m = \cntsymb\! Y$ in $\cQ_\q$.
For each $m\ge 1$, using Lemma~\ref{lem:mth_powers}, we obtain (explicit)
$f_i^m(X),g_i^m(X)\in S$ such that
$H_m = \sum_{i=1}^r f_i^m(\q) \cdot \cntsymb(\AA^{2mn_i} / \mathsf M_{{}^m\gamma_i})$
and
$H_{2m} = \sum_{i=1}^r g_i^m(\q) \cdot \consymb(\mathsf G_{\Lambda(({}^m\gamma_i)^\bullet)})$.

Fix $n\ge 1$.
There exists $m\ge 1$ such that $H_m,H_{2m}\in \cZ_\q$ and $\cntsymb \! Y
\equiv H_m \equiv H_{2m} \pmod {\q^n}$.
Given $m$, the rational functions $f_i^m(X)$ (resp.\ $g_i^{m}(X)$)
can be replaced by Laurent polynomials $f_i(X)$ (resp.\ $g_i(X)$)
such that $f_i(\q)$ (resp.\ $g_i(\q)$) and $f_i^m(\q)$ (resp.\ $g_i^m(\q)$)
are sufficiently close within $\cQ_\q$.
This yields Theorem~\ref{thm:main_congruence} and in fact, it provides the
following slight improvement.

\begin{cor}
  \label{cor:r}
  The number $r$ in Theorem~\ref{thm:main_congruence} can be
  chosen to be independent of~$n$. \qed
\end{cor}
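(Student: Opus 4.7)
The plan is to extract the corollary directly from the explicit chain of reductions developed in the preceding subsection, observing carefully where the parameter $r$ enters and where it does not. The critical point is that the only place any counting is introduced is Theorem~\ref{thm:belkale_brosnan}, and the number of graphs produced there depends solely on the scheme $Y$ (not on $n$). Everything that follows keeps the indexing set fixed and manipulates only the single auxiliary parameter~$m$.

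First I would fix, once and for all, graphs $\Gamma_1,\dotsc,\Gamma_r$ and rational functions $h_1,\dotsc,h_r \in S$ with $\cntsymb\! Y = \sum_{i=1}^r h_i(\q)\cdot \cntsymb\! V^{\gamma_i}_{\max}$ as supplied by Theorem~\ref{thm:belkale_brosnan}; here $r$ is determined by $Y$ alone. For each $m\ge 1$, form
\[
  H_m = \sum_{i=1}^r h_i(\q)\q^{\ell_i}\cdot \asksymb({}^m\gamma_i),
\]
so that $r$ summands suffice for \emph{every} $m$. By Corollary~\ref{cor:limit}, $H_m \to \cntsymb\! Y$ in $\cQ_\q$ as $m\to\infty$. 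Next, apply Lemma~\ref{lem:mth_powers}\ref{lem:mth_powers1} and~\ref{lem:mth_powers2} termwise to rewrite $H_m$ and $H_{2m}$ as sums, indexed again by $i=1,\dotsc,r$, of $\cQ_\q$-multiples of $\cntsymb(\AA^{2mn_i}/\mathsf M_{{}^m\gamma_i})$ and of $\consymb(\mathsf G_{\Lambda(({}^m\gamma_i)^\bullet)})$, respectively.

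Given $n\ge 1$, I would then choose $m$ large enough so that $\cntsymb\! Y \equiv H_m \equiv H_{2m} \pmod{\q^n}$ in $\cZ_\q$; this is possible by the convergence $H_m \to \cntsymb\! Y$. This produces exactly $r$ commutative subgroup schemes $M_i = \mathsf M_{{}^m\gamma_i}\le \Uni_{d_i}$ (with $d_i = 2mn_i$) and $r$ Baer group schemes $G_i = \mathsf G_{\Lambda(({}^m\gamma_i)^\bullet)}$, both families depending on $m$ (hence on $n$), but indexed by the same set of size~$r$. The coefficients are \emph{a priori} elements of $S$, not Laurent polynomials; but since the discrete subring $\ZZ[\q^{\pm 1}]$ is dense in $R_\q \supset S$ in the $\q$-adic topology, each coefficient may be replaced by $f_i(X),g_i(X)\in\ZZ[X^{\pm 1}]$ whose values at $\q$ agree with the corresponding $S$-coefficient modulo a sufficiently high power of $\q$, preserving the congruence modulo $\q^n$ after absorbing the error into an increase of $m$ if necessary.

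The only potential obstacle is ensuring that this Laurent-polynomial approximation step does not inadvertently change the cardinality of the indexing set; but because the approximation is done coefficient-by-coefficient (one rational function at a time, with the graphs $\Gamma_i$ held fixed), the number of summands is preserved throughout. Hence a single value of $r$, determined at the outset by Belkale--Brosnan and independent of $n$, works for every~$n$.
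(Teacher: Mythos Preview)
Your proposal is correct and follows essentially the same route as the paper: fix the Belkale--Brosnan data once (so $r$ depends only on $Y$), form $H_m$, pass to orbit counts and class numbers via Lemma~\ref{lem:mth_powers}, choose $m$ large for the given $n$, and truncate the $S$-coefficients to Laurent polynomials. The only superfluous remark is the hedge ``after absorbing the error into an increase of $m$ if necessary'': once $m$ is fixed with $H_m \equiv \cntsymb\! Y \pmod{\q^n}$, truncating each $f_i^m(\q)\in R_\q$ to a Laurent polynomial agreeing with it modulo $\q^n$ already forces $F - H_m \in \q^n\cZ_\q$ (the orbit counts lie in $\cZ$), so no further adjustment of $m$ is required.
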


\subsection*{Acknowledgements}

I am grateful to Christopher Voll and James B.\ Wilson for discussions on the
work described in this paper,
and to the anonymous referee for helpful comments and suggestions.

{
  \def\emph{\itemph}
  \bibliographystyle{abbrv}
  \bibliography{density}
}

\vspace*{2em}

{\small
  \noindent
  School of Mathematical and Statistical Sciences\\
  University of Galway\\Ireland\\
  E-mail: \href{mailto:tobias.rossmann@universityofgalway.ie}{tobias.rossmann@universityofgalway.ie}
}

\end{document}